\def\@tocline#1#2#3#4#5#6#7{\relax
  \ifnum #1>\c@tocdepth 
  \else
    \par \addpenalty\@secpenalty\addvspace{#2}%
    \begingroup \hyphenpenalty\@M
    \@ifempty{#4}{%
      \@tempdima\csname r@tocindent\number#1\endcsname\relax
    }{%
      \@tempdima#4\relax
    }%
    \parindent\z@ \leftskip#3\relax \advance\leftskip\@tempdima\relax
    \rightskip\@pnumwidth plus4em \parfillskip-\@pnumwidth
    #5\leavevmode\hskip-\@tempdima
      \ifcase #1
       \or\or \hskip 1em \or \hskip 2em \else \hskip 3em \fi%
      #6\nobreak\relax
      \dotfill
      \hbox to\@pnumwidth{\@tocpagenum{#7}}
    \par
    \nobreak
    \endgroup
  \fi}
\newtheorem{theorem}{Theorem}[section]
\newtheorem{lemma}[theorem]{Lemma}
\newtheorem{proposition}[theorem]{Proposition}
\theoremstyle{definition}
\newcommand{\R}{{\mathbb R}}
\newcommand{\beqn}{\begin{eqnarray}}
\newcommand{\eeqn}{\end{eqnarray}}   
\newcommand{\beq}{\begin{eqnarray*}}
\newcommand{\eeq}{\end{eqnarray*}}
\newcommand{\bea}{\begin{eqnarray}}
\newcommand{\eea}{\end{eqnarray}}
\newcommand{\be}{\small\begin{equation}}
\newcommand{\bel}[1]{\small\begin{equation}\label{#1}}
\newcommand{\ee}{\end{equation}\normalsize}
\newcommand{\BA}{\begin{array}}
\newcommand{\EA}{\end{array}}
\newcommand{\BAN}{\renewcommand{\arraystretch}{1.2}
\setlength{\arraycolsep}{2pt}\begin{array}}
\newcommand{\BAV}[2]{\renewcommand{\arraystretch}{#1}
\setlength{\arraycolsep}{#2}\begin{array}}
\newcommand{\BSA}{\begin{subarray}}
\newcommand{\ESA}{\end{subarray}}
\newcommand{\BAL}{\begin{aligned}}
\newcommand{\EAL}{\end{aligned}}
   \def\BBN {\mathbb N}    
   \def\BBR {\mathbb R}    \def\BBS {\mathbb S}
\newcommand{\xa}{\alpha}
\newcommand{\xb}{\beta}
\newcommand{\xg}{\gamma}
\newcommand{\xG}{\Gamma}
\newcommand{\xd}{\delta}
\newcommand{\xD}{\Delta}
\newcommand{\xe}{\varepsilon}
\newcommand{\xz}{\zeta}
\newcommand{\xl}{\lambda}
\newcommand{\xL}{\Lambda}
\newcommand{\xm}{\mu}
\newcommand{\xn}{\nu}
\newcommand{\xr}{\rho}
\newcommand{\xs}{\sigma}
\newcommand{\xf}{\phi}
\newcommand{\xF}{\Phi}
\def\bal#1\eal{\small\begin{align*}#1\end{align*}\normalsize}
\def\ba#1\ea{\small\begin{align}#1\end{align}\normalsize}
\numberwithin{equation}{section}
\begin{document}

\title{ Ancient solutions to the Allen Cahn equation in catenoids}
\author[K. T. Gkikas]{Konstantinos T. Gkikas}
\address{Department of Mathematics, University of the Aegean, 
83200 Karlovassi, Samos, Greece\newline
Department of Mathematics, National and Kapodistrian University of Athens, 15784 Athens, Greece
}
\email{kgkikas@aegean.gr}

\date{\today}

\begin{abstract}
Let $N\geq 2$ and $F:\BBR^N\to \BBR $ be the unique increasing radially symmetric function satisfying the minimal surface equation for graphs
with the initial conditions $F(1)=0$ and $\lim_{r\to 1}F_r(r)=\infty;$ $r=|x|.$ We construct an ancient solution  to Allen-Cahn equation $\tilde u_t = \Delta_{M} \tilde u + (1-{\tilde u}^2)\tilde u$ in $M\times(-\infty,0),$ where $M=\{(x, \pm F(|x|)):\;x\in\BBR^N,\;|x|\geq1\}$ is a $N$-dimensional catenoid in $\BBR^{N+1}$ and $\xD_{M}$ is the Laplace Beltrami operator of $M.$ In particular, we construct a solution of the form $u(t,r,F(r))=u(t,r,-F(r))$ such that 

$$
u(t,r,F(r)) \approx   \sum_{j=1}^k (-1)^{j-1}w(r-\rho_j(t))  - \frac 12 (1+ (-1)^{k}) \quad \hbox{ as } t\to -\infty,
$$
where  $w(s)$ is a solution of $w'' + (1-w^2)w=0$  with $w(\pm \infty)= \pm 1,$ given by
$w(s) = \tanh \left(\frac s{\sqrt{2}} \right),$ and 
$$\rho_j(t)=\sqrt{-2(n-1)t}+\frac{1}{\sqrt{2}}\left(j-\frac{k+1}{2}\right)\log\left(\frac {|t|}{\log |t| }\right)+ O(1),\quad j=1,\ldots ,k.$$

\medskip

\noindent\textit{Key words: } Nonlinear parabolic equation, Allen–Cahn equation, Ancient solutions

\medskip

\noindent\textit{Mathematics Subject Classification:} 35K55, 35D35

\end{abstract}

\maketitle
\tableofcontents

\section{Introduction}

Let $N\geq2$ . The semilinear parabolic equation 
\ba 
u_t = \Delta u + (1-u^2)u \quad\text{in}\; \BBR\times\BBR^N   \label{ac0} 
\ea
is known as Allen-Cahn equation \cite{ac}. This equation is a typical model for the phase transitions. In particular, representing $u^\pm=\pm1$ the two phases of a material, a solution $u(t,x)$ whose values lie at all times in $[-1,1]$ and in most of the space $\R^N$ takes values close to either $+1$ or $-1,$ corresponds to a continuous realization of the phase state of the material. Such solutions as well as their interfacial region have been a subject of intense study. We recall here that the interfacial region at time $t$ is defined as the set $\BBR^N\setminus\{x\in\BBR^N:u(t,x)\approx\pm1\}$. It is well known that there is a close connection between interfacial region of the solutions and minimal surfaces and surfaces evolving by mean curvature, see for instance \cite{bk,chen,5authors,dkwcpam,evans,ninomiya} and their references.

In the present paper, we construct an ancient solution to the Allen Cahn equation 

\ba 
\tilde u_t = \Delta_M \tilde u + (1-{\tilde u}^2)\tilde u \quad\text{in}\; (-\infty,0)\times M,   \label{accat} 
\ea
where $M$ is a $N$-dimensional catenoid in $\BBR^{N+1}$ and $\xD_M$ is the Laplace Beltrami operator of $M.$ In addition this solution contains one or more transition layers at all negative times. These transition layers are related to the single-transition layer solution of
\bal
w''+w(1-w^2)=0\quad\hbox{in}\;\mathbb{R},\quad\lim_{x\rightarrow\pm\infty}w(x)=\pm1\quad\text{and}\quad w(0)=0,
\eal
which  is given in closed form by
 \be w(x)=\tanh\left(\frac{x}{\sqrt{2}}\right ). \label{w}\ee

In particular, we consider the case that $M$ is given by

\bal
M=\{(x,\pm F(|x|)):\;x\in\BBR^N,\;|x|\geq1\},
\eal
where $F$ is the unique increasing radially symmetric function $F:\BBR^N\to \BBR $ satisfying the minimal surface equation for graphs

\ba\label{minimalsurfeq}
\text{div}\left(\frac{\nabla F}{\sqrt{1+|\nabla F|^2}}\right)&=0\quad\text{in}\;\;\{x\in\BBR^N:|x|>1\},
\ea
with the initial conditions $F(1)=0$ and $\lim_{r\to 1}F_r(r)=\infty;$ $r=|x|.$ In addition, we are concern with ancient solutions of the form $\tilde u(t,Y(r,\theta))=u(t,r),$ where 
$Y(r,\theta):(1,\infty)\times \BBS_{N-1}\to M$ is a local parametrization for $M_{+}:=M\cap\{(x,x_{N+1})\in\BBR^N:\;x_{N+1}>0\}$ in polar coordinates, given by
\bal
Y(r,\theta):=(r\theta,F(r)). 
\eal
Therefore, taking in to account that 

\bal
\xD_{M_+}\tilde u&=\frac{1}{r^{N-1}\sqrt{1+F_r^2}}\frac{d}{dr}\left(\frac{r^{N-1}u_r}{\sqrt{1+F_r^2}}\right)
\eal
and

\ba\label{Fr}
F_r(r)=\frac{1}{\sqrt{r^{2N-2}-1}},
\ea
$u$ should satisfy 

\ba\label{ac2}
u_t=\frac{r^{2(N-1)}-1}{r^{2(N-1)}}u_{rr}+\frac{N-1}{r}u_r+u(1-u^2)\quad\text{in}\;\;(-\infty,0)\times(1,\infty).
\ea

Let us now present the ansatz of our construction. Given $k\geq 1$, we build a solution of the above equation such that
\be
u(t,r)  =  \sum_{i=1}^k   (-1)^{j-1}w ( r - \rho_j(t) )  - \frac 12 ( 1+ (-1)^{k-1})  \ +\ \phi(t,r),
\label{forma} \ee
where $\phi(t,r)$ is a lower order perturbation as $t\to -\infty$ and the functions
\be
\rho_1(t)  < \rho_2(t) < \cdots <  \rho_k(t) 
\label{nn}\ee
satisfy at main order $\rho_j(t) \sim    \sqrt{ -2(n-1) t },$ which is a sphere solution of the equation $\xr'=(N-1)\xr^{-1}$ in $(-\infty,0).$ This equation is at main order the interface dynamic corresponding to a radial symmetric solution of \eqref{ac0},  see \cite{bk} for more details. In our current setting, the dynamics driving the interaction of the different components of the interface for a solution of the form \eqref{forma}
are given at main order by the first-order Toda type system

\be
\frac{1}{\xb}\left(\rho_j'+\frac{n-1}{\rho_j}\right)-e^{-\sqrt{2}(\rho_{j+1}-\rho_j)}+e^{-\sqrt{2}(\rho_{j}-\rho_{j-1})}=0,\quad j=1,\ldots ,k,\;\quad\;t\in(-\infty, 0]\label{toda01},
\ee
with the conventions
$\rho_{k+1}=\infty\quad \hbox{and}\quad \rho_0=-\infty,$ and a explicit constant $\beta>0$. This Toda type system is the same as that in \cite{gklpino1}. In $\BBR^N,$  interaction of interfaces  has already
been considered in \cite{carrpego,chen,gklpino2,fuscohale} for $n=1,$ and in the static higher dimensional setting in \cite{AdW,dkw3,dkw0,dkpw}.

Let us conclude the introduction with our main result.

\begin{theorem} \label{teo2} Given any $k\geq 1$, there exist functions $\rho_j(t)$ as in \eqref{nn}
with
\be {\rho}_j(t)=   \sqrt{-2(n-1)t}  + \frac{1}{\sqrt{2}}\left(j-\frac{k+1}{2}\right)\log\left(\frac{|t|}{\log |t|}\right)+ O(1),\quad j=1,\ldots ,k,\;\; 
\label{forma2}\ee
as $t\to -\infty$,
and an ancient solution $u(t,r)$ of equation \eqref{ac2} of the form \eqref{forma} so that
\bal
\lim_{t\to -\infty} \phi (t,r) = 0 \quad\hbox{uniformly in } r\in[1,\infty) .
\eal
\end{theorem}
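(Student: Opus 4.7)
The plan is to carry out an infinite-dimensional Lyapunov–Schmidt reduction in the same spirit as \cite{gklpino1}, adapted to the catenoidal Laplace–Beltrami operator appearing in \eqref{ac2}. The ancient solution is built as a controlled perturbation of a superposition of translated kinks, with the centers $\rho_j(t)$ tuned so that the residual of the approximation becomes orthogonal to the translation modes; the compatibility conditions for the projection are precisely the Toda-type system \eqref{toda01}.

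First I would take as initial ansatz $U_*(t,r) := \sum_{j=1}^{k}(-1)^{j-1} w(r-\rho_j(t)) - \tfrac{1}{2}(1+(-1)^{k-1})$ with trial trajectories close to \eqref{forma2}, and compute the residual $\mathcal{E}[U_*]$ of equation \eqref{ac2}. Near each layer $r=\rho_j(t)$ the dominant contributions decompose into three groups: the velocity term $\rho_j'(t)\,w'(r-\rho_j)$; the geometric term $\frac{N-1}{\rho_j}\,w'(r-\rho_j)$ produced by the mean-curvature-like piece of $\Delta_M$ (the discrepancy $\frac{r^{2(N-1)}-1}{r^{2(N-1)}}-1 = -r^{-2(N-1)}$ being of strictly lower order at $r\sim\rho_j$ since $2(N-1)\geq 2$); and the kink-interaction terms of sizes $e^{-\sqrt{2}(\rho_{j+1}-\rho_j)}$ and $e^{-\sqrt{2}(\rho_j-\rho_{j-1})}$ arising from the nonlinear superposition. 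Projecting $\mathcal{E}[U_*]$ against the translation modes $w'(\cdot-\rho_j)$ reproduces exactly \eqref{toda01} at main order, whose asymptotic analysis as $t\to -\infty$ yields the explicit profile \eqref{forma2}.

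Writing $u=U_*+\phi$ leads to a nonlinear equation $\mathcal{L}\phi = \mathcal{E}[U_*] + \mathcal{N}(\phi)$, where $\mathcal{L}\phi := -\partial_t\phi + \frac{r^{2(N-1)}-1}{r^{2(N-1)}}\partial_{rr}\phi + \frac{N-1}{r}\partial_r\phi + (1-3U_*^2)\phi$ and $\mathcal{N}(\phi) := -3U_*\phi^2-\phi^3$. Since $\mathcal{L}$ inherits near-kernels spanned by $w'(r-\rho_j(t))$ from the one-dimensional linearization $L_0\psi = \psi''+(1-3w^2)\psi$, one cannot invert $\mathcal{L}$ directly. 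Instead I would develop a linear theory for the projected problem $\mathcal{L}\phi = f + \sum_{j=1}^k c_j(t)\,w'(r-\rho_j(t))$ under the orthogonality conditions $\int \phi(t,r)\, w'(r-\rho_j(t))\,dr=0$, producing an a priori bound $\|\phi\|_{\mathcal{X}} \lesssim \|f\|_{\mathcal{Y}}$ in weighted $L^\infty$ norms that combine exponential decay in the transverse direction $r-\rho_j(t)$ with appropriate polynomial decay in $|t|$. The tools are parabolic barriers built from $\cosh^{-\sigma}$ profiles, the spectral gap of $L_0$, and an approximation by finite slabs $[-T,0]\times(1,\infty)$ followed by a diagonal limit $T\to\infty$ that provides the ancient solution.

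The last step is a contraction mapping argument in the product space of perturbations $\phi$ and corrections $h_j$ to $\rho_j$. Imposing $c_j\equiv 0$ for all $j$ translates into a nonlinear system of ODEs for $(h_j)$; its linear part is the Jacobi-type linearization of \eqref{toda01} around the explicit profile \eqref{forma2}, which is invertible in slowly growing function classes as in \cite{gklpino1}, and the nonlinear corrections are quadratically small on a ball in $\mathcal{X}$. I expect the main obstacle to lie in the linear theory: because the problem is ancient there is no initial datum to anchor the solution, so the weights in $\mathcal{X}$ and $\mathcal{Y}$ must be engineered simultaneously to (i) absorb the residual $\mathcal{E}[U_*]$ generated by the catenoidal coefficients near the moving layers $r\sim\sqrt{-2(N-1)t}$, (ii) be preserved by the projected propagator of $\mathcal{L}$, and (iii) render $\mathcal{N}(\phi)$ quadratically small. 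It is this tight coupling, together with the balance between the curvature drift $\frac{N-1}{\rho_j}$ and the Toda interaction $e^{-\sqrt 2(\rho_{j+1}-\rho_j)}$, that ultimately forces the logarithmic corrections $\tfrac{1}{\sqrt 2}\bigl(j-\tfrac{k+1}{2}\bigr)\log(|t|/\log|t|)$ in \eqref{forma2}.
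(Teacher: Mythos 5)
Your proposal follows essentially the same route as the paper: an infinite-dimensional Lyapunov--Schmidt reduction around the kink superposition, derivation of the Toda-type system \eqref{toda01} by projecting the residual onto the translation modes $w'(r-\rho_j)$, a weighted-in-space, decaying-in-time linear theory for the projected problem obtained by barriers plus the spectral gap of $L_0$ and an ancient limit from finite time slabs, and a contraction in the pair $(\phi,h)$. The one technical point you pass over is the degeneracy of the diffusion coefficient $\frac{r^{2(N-1)}-1}{r^{2(N-1)}}$ at the throat $r=1$; the paper removes it by the change of variable $y=\sqrt{r^{2(N-1)}-1}$ and imposes the reflection conditions $\psi(t,0)=\psi_y(t,0)=0$ there, which is what lets the barrier argument and a-priori estimates of Section~3 close cleanly.
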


\medskip

\noindent \textbf{Acknowledgement.} The author wishes to thank Professor M. del Pino for useful discussions. The research project was supported by the Hellenic Foundation for Research and Innovation (H.F.R.I.) under the  “2nd Call for H.F.R.I. Research Projects to support Post-Doctoral Researchers” (Project Number: 59).

\section{Preliminaries}

Let $u$ be a strong solution of \eqref{ac2}. Setting $y=\sqrt{r^{2(N-1)}-1}$ and $v(t,y)=u(t,(1+y^2)^\frac{1}{2(N-1)}),$  by straightforward calculations, we deduce

\ba\label{chvar}\BAL
\frac{r^{2(N-1)}-1}{r^{2(N-1)}}u_{rr}+\frac{N-1}{r}u_r&=(N-1)^2(1+y^2)^\frac{N-2}{N-1}v_{yy}+(N-1)(2N-3)\frac{y}{(y^2+1)^{\frac{1}{N-1}}}v_y\\
&=\frac{(N-1)^2}{(y^2+1)^{\frac{1}{2(N-1)}}}\frac{d}{dy}\left((y^2+1)^{\frac{2N-3}{2(N-1)}}v_y\right).\EAL
\ea

Hence, it is equivalent to study the equation 

\ba\label{mainequation}
v_t=\frac{(N-1)^2}{(y^2+1)^{\frac{1}{2(N-1)}}}\frac{d}{dy}\left((y^2+1)^{\frac{2N-3}{2(N-1)}}v_y\right)+f(v)\quad\text{in}\quad (-\infty,-T)\times(0,\infty),
\ea
where $f(v)=v(1-v^2)$ and $T>0$ is a constant large enough. 

We list below some notations that we use frequently in the present paper.

\medskip

$\bullet$ Let $w(s)=\tanh(\frac{s}{\sqrt{2}})$ and 
\bal
r(y):=(1+y^2)^\frac{1}{2(N-1)}.
\eal

$\bullet$ Denote by $\eta(t)$ the solution of 
\ba\left\{\BAL
\eta'+\frac{1}{2t}\eta+ \xb e^{-\sqrt{2}\eta}&=0\quad\text{in} \;\;(-\infty,-1]\\
\eta(-1)&=0,
\EAL\right.\label{odesimp21}
\ea
where $\xb$ is defined in \eqref{beta}. In addition, by \cite[Lemma 5.2]{gklpino1}, there exists a positive constant $c_0$ such that

\ba\label{lemodesimpl1}\BAL
-\frac{1}{\sqrt{2}}\log\left(c^{-1}_0\frac{\log |t|}{|t|}\right)&\leq\eta(t)\leq -\frac{1}{\sqrt{2}}\log\left(c_0\frac{\log |t|}{|t|}\right)\quad\forall t\leq-2,
\\
0&\leq-\eta'(t)\leq c_0\frac{\log |t|}{|t|}\quad\forall t\leq-2.
\EAL\ea

$\bullet$ Denote by $\xg_j$ the constants satisfying
\bal-\xg_j=\xg_{k-j+1}=\frac{1}{2}\sum_{i=j}^{k-j}b_i\qquad\mathrm{for}\;j\leq\frac{k}{2},\eal
where
\bal
b_l=-\frac{1}{\sqrt{2}}\log\left(\frac{1}{2\xb}(k-l)l\right)\qquad \text{for any}\;\;l=1,...,k-1.
\eal

$\bullet$ Let
\be \label {fff}
\rho^0_j(t) = \sqrt{-2(n-1)t} + (j-\frac{k+1}{2})\eta +\xg_j\quad\text{for}\;\;j=1,...,k
\ee
and $\xr^0=(\xr_1^0,...,\xr_k^0).$
\subsection{The ansatz}
For notational simplicity, we shall only consider the case of an even
$k$. The odd case can be handled similarly.

We will construct an ancient solution of the form
\bal
v(t,y):=\sum_{j=1}^k(-1)^{j+1}w(r(y)-\xr_j(t))-1+\psi(t,y),
\eal
where

\bal
r(y):=(1+y^2)^\frac{1}{2(N-1)}.
\eal
In addition, we will show that the function $\xr=(\xr_1,...,\xr_k)$ satisfies at main order the Toda type system \eqref{toda01}. 

Throughout the paper, we assume that the function $\rho$ satisfies \eqref{nn} and is given by 

\be\label{forma4}
\rho(t) = \rho^0(t)  + h(t)
\ee
where  the (small) function $h(t)=(h_1(t),...,h_k(t))$ is a parameter to be found, on which we only a priori assume
  
\ba
\sup_{t\leq -2}|{h}(t)|+\sup_{t\leq -2}\frac{|t|}{\log|t|}|{h}'(t)|<1.
\ea

Set
\ba\label{z}
z(t,r(y)):=\sum_{j=1}^k(-1)^{j+1}w(r(y)-\xr_j(t))-1,
\ea
we consider the following projected version of equation \eqref{mainequation} in terms of $\psi$
\ba\label{mainpro}\left\{\BAL
\psi_t&=\frac{(N-1)^2}{(y^2+1)^{\frac{1}{2(N-1)}}}\frac{d}{dy}\left((y^2+1)^{\frac{2N-3}{2(N-1)}}\psi_y\right)+f'(z)\psi\\
&\qquad\qquad+E+N(\psi)-\sum_{i=1}^kd_i(t)w'(r(y)-\xr_i(t))\qquad\qquad\text{in}\;\;(-\infty,-T)\times(0,\infty)\\
\psi(t,0)&=\psi_y(t,0)=0\phantom{+E+N(\psi)-\sum_{i=1}^kd_i(t)w'(r(y)-\xr_i(t))}\,\qquad\text{in}\;\;\;(-\infty,-T)\\
\EAL\right.
\ea
and

\ba\label{orthcon}
\int_0^\infty\psi(t,y)w'(r(y)-\xr_j(t))r(y)dy=0\quad\forall\;\;j=1,...,k\quad\text{and}\quad t\in (-\infty,-T),
\ea
where
\ba\label{error}\BAL
E&=\sum_{i=1}^k(-1)^{i+1}\left(w'(r(y)-\xr_i(t))\xr_i'(t)+\frac{N-1}{r(y)}w'(r(y)-\xr_i(t))\right)+f(z(t,r(y)))\\
&\qquad-\sum_{i=1}^k(-1)^{i+1}(f(w(r(y)-\xr_i(t)))+\frac{w''(r(y)-\xr_i(t))}{1+y^2}),
\EAL
\ea

\bal
N(\psi)=f(z(t,r(y))+\psi(t,y))-f(z(t,r(y)))-f'(z)\psi.
\eal
The functions $d_i(t)$ are chosen such that $\psi$ satisfies the orthogonality condition \eqref{orthcon}, namely

\ba\label{ole}\BAL
&\sum_{i=1}^kd_i(t)\int_0^\infty w'(r(y)-\xr_i(t))w'(r(y)-\xr_j(t))r(y)dy=-\xr_j'(t)\int_0^\infty \psi w''(r(y)-\xr_j(t))r(y)dy\\
&\qquad\qquad(N-1)\int_0^\infty\psi(t,y)(w''(r(y)-\xr_j(t))y)_y dy\\
&+\qquad\qquad\int_0^\infty f'(z(t,r(y)))\psi(t,y)w'(r(y)-\xr_j(t))r(y)dy\\
&\qquad\qquad+\int_0^\infty(E+N(\psi))w'(r(y)-\xr_j(t))r(y)dy \qquad\qquad \forall\; j=1,...,k\quad\text{and} \;\;t<-T.
\EAL
\ea
Later we will choose $\xr_i(t)$ such that $d_i(t) = 0,$ for any $i = 1, ..., k.$

In the following lemma, we find an estimate for the error term $E=E(t,r(y))$ in \eqref{error}.
\begin{lemma}
Let $T_0>2,$ $0<\xs<\sqrt{2},$ we define
\ba\xF(t,r):=\left\{\BAL
&e^{\xs(-r+\rho_{j-1}^0(t))}+e^{\xs(r-\rho_{j+1}^0(t))}&&\mathrm{if}\;\;\frac{\rho_j^0(t)+\rho_{j-1}^0(t)}{2}\leq r\leq \frac{\rho_j^0(t)+\rho_{j+1}^0(t)}{2},\;j=2,...,k,\\
&e^{\xs(r-\rho_{2}^0(t))}&&\mathrm{if}\;\; 1< r\leq \frac{\rho_1^0(t)+\rho_{2}^0(t)}{2},
\EAL\right.\label{bound}
\ea
with $\rho_{k+1}^0=\infty.$ Then there exists a uniform constant $C>0$ which depends only on $k,$ such that
\bal|E(t,r)|\leq C\left(\frac{\log |t|}{|t|}\right)^{\frac{\sqrt{2}-\xs}{2\sqrt{2}}}\xF(t,r)\quad\forall (t,r)\in(-\infty, -T_0]\times(1,\infty),\eal
where $E$ is the error term in \eqref{error}.\label{remark}
\end{lemma}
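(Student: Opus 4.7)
The plan is to split $E = E_{\mathrm{vel}} + E_{\mathrm{nl}} + E_{\mathrm{geom}}$ along the three groups of terms in \eqref{error}: the velocity/curvature part $E_{\mathrm{vel}} = \sum_{i=1}^k(-1)^{i+1}(\rho_i'+\tfrac{N-1}{r})w'(r-\rho_i)$, the nonlinear mismatch $E_{\mathrm{nl}} = f(z) - \sum_i (-1)^{i+1}f(w(r-\rho_i))$, and the metric correction $E_{\mathrm{geom}} = -\sum_i (-1)^{i+1}w''(r-\rho_i)/(1+y^2)$. For each $j\in\{1,\dots,k\}$ I work in the slab $\mathcal I_j:=[\tfrac{\rho_{j-1}^0+\rho_j^0}{2},\tfrac{\rho_j^0+\rho_{j+1}^0}{2}]$ (with $\rho_0^0=-\infty$, $\rho_{k+1}^0=\infty$). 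Thanks to \eqref{forma4} and the a priori control on $h$, for $i\neq j$ and $r\in\mathcal I_j$ one has $|r-\rho_i(t)|\geq\tfrac12|\rho_j^0-\rho_i^0|-O(1)$, so the exponential decay of $w$ ensures that every $w^{(\ell)}(r-\rho_i)$ with $\ell\in\{0,1,2\}$, $i\neq j$, is bounded by a constant multiple of $e^{-\sqrt2(r-\rho_{j-1}^0)}+e^{-\sqrt2(\rho_{j+1}^0-r)}$, i.e.\ by the exponentials defining $\Phi(t,r)$.

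I then estimate each piece pointwise in $\mathcal I_j$. The dominant contribution to $E_{\mathrm{vel}}$ is $(-1)^{j+1}(\rho_j'+\tfrac{N-1}{r})w'(r-\rho_j)$; direct differentiation of \eqref{fff} combined with \eqref{lemodesimpl1} gives $(\rho_j^0)'+\tfrac{N-1}{\rho_j^0}=(j-\tfrac{k+1}{2})\eta'(t)+O(\eta/|t|)=O(\log|t|/|t|)$, and by the a priori bound on $h,h'$ the same size persists for $\rho_j'+\tfrac{N-1}{r}$ throughout $\mathcal I_j$. For $E_{\mathrm{nl}}$, the key observation is that when $k$ is even the global constant $-1$ in \eqref{z} exactly cancels the contributions $\sum_{i<j}(-1)^{i+1}+\sum_{i>j}(-1)^{i+1}(-1)$ of the saturated layers, so that $z=(-1)^{j+1}w(r-\rho_j)+R_j$, with $R_j=\sum_{i\neq j}(-1)^{i+1}[w(r-\rho_i)-\mathrm{sgn}(r-\rho_i^0)]$ a sum of exponentially small tails. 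Taylor expanding $f(z)$ about $(-1)^{j+1}w(r-\rho_j)$, and using that $f$ is odd so that $f((-1)^{j+1}w)=(-1)^{j+1}f(w)$, yields $|E_{\mathrm{nl}}|\lesssim\sum_{i\neq j}e^{-\sqrt2|r-\rho_i^0|}$, again bounded by the exponentials in $\Phi$. The metric piece is even smaller: $|E_{\mathrm{geom}}|\lesssim(1+y^2)^{-1}\lesssim|t|^{-(N-1)}$ uniformly, using $r\geq\rho_1^0-O(1)\sim\sqrt{|t|}$.

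Combining these estimates gives a pointwise bound of the form
\[
|E(t,r)|\leq C\bigl(\tfrac{\log|t|}{|t|}\bigr)e^{-\sqrt2|r-\rho_j^0|}+C\bigl(e^{-\sqrt2(r-\rho_{j-1}^0)}+e^{-\sqrt2(\rho_{j+1}^0-r)}\bigr),\quad r\in\mathcal I_j.
\]
To recast this in the form claimed, I write $e^{-\sqrt2 d}=e^{-\sigma d}\cdot e^{-(\sqrt2-\sigma)d}$. On $\mathcal I_j$ the smaller of the two distances $r-\rho_{j-1}^0$ and $\rho_{j+1}^0-r$ is at least $\tfrac12(\rho_j^0-\rho_{j-1}^0)\sim\tfrac{1}{2\sqrt2}\log(|t|/\log|t|)$ by \eqref{lemodesimpl1}, producing the loss factor $e^{-(\sqrt2-\sigma)d}\lesssim(\log|t|/|t|)^{(\sqrt2-\sigma)/(2\sqrt2)}$; the $(\log|t|/|t|)e^{-\sqrt2|r-\rho_j^0|}$ term is strictly smaller than this bound since $\sigma<\sqrt2$. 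The boundary slabs $j=1,k$ are handled by the same argument with the conventions $\rho_0^0=-\infty$, $\rho_{k+1}^0=\infty$, which kill one of the exponentials and are consistent with the asymmetric form of $\Phi$ for $r\leq\tfrac{\rho_1^0+\rho_2^0}{2}$.

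The main obstacle is the careful bookkeeping of the global constant $-1$ in \eqref{z} against the alternating sums on either side of the dominant layer. For even $k$ this identification is clean and $E_{\mathrm{nl}}$ reduces to a Taylor remainder in the tails $R_j$; the odd-$k$ case (which the paper sets aside) requires carrying an extra constant offset $s_j\in\{\pm1\}$ through the expansion of $f$ and tracking how $f((-1)^{j+1}w+s_j)$ behaves. Aside from this, the crux of the estimate is simply combining the three decay mechanisms — the $\log|t|/|t|$ mismatch of $\rho_j^0$ with the sphere flow, the exponential interaction between neighbouring layers, and the algebraic decay $|t|^{-(N-1)}$ of the metric correction — and extracting from them the stated uniform bound in terms of $\Phi$.
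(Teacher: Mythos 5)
Your decomposition of $E$ into velocity, nonlinear, and metric pieces, the slab-by-slab analysis, the bound on the off-diagonal tails by the exponentials defining $\Phi$, the Taylor expansion of $f$ about the dominant layer exploiting the oddness of $f$, and the extraction of the loss factor $(\log|t|/|t|)^{(\sqrt{2}-\sigma)/(2\sqrt{2})}$ from the half-slab width via $e^{-\sqrt{2}d}=e^{-\sigma d}e^{-(\sqrt{2}-\sigma)d}$ all track the paper's argument closely. However, two of your intermediate pointwise claims are false as stated, and both fail precisely in the regions where $r$ is far from the layer locations, namely as $r\to 1^+$ and as $r\to\infty$.

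First, you assert that $\rho_j'+\tfrac{N-1}{r}=O(\log|t|/|t|)$ ``throughout $\mathcal{I}_j$.'' This is only true for $r$ in an $O(\eta)$-neighbourhood of $\rho_j^0$, i.e.\ for the interior slabs $j=2,\dots,k-1$. For $j=1$, as $r\to1^+$ one has $\tfrac{N-1}{r}\to N-1=O(1)$; for $j=k$, as $r\to\infty$ one has $\rho_k'+\tfrac{N-1}{r}\to\rho_k'\sim-\tfrac{n-1}{\sqrt{|t|}}$, which is $\tfrac{1}{\sqrt{|t|}}\gg\tfrac{\log|t|}{|t|}$. Second, you claim $|E_{\mathrm{geom}}|\lesssim(1+y^2)^{-1}\lesssim|t|^{-(N-1)}$ uniformly ``using $r\geq\rho_1^0-O(1)$,'' but $r$ ranges down to $1$, where $(1+y^2)^{-1}\approx1$; the inequality $(1+y^2)^{-1}\leq C|t|^{-(N-1)}$ is simply false near $r=1$. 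In both cases the overall estimate still holds, but for a different reason which you did not invoke: far from the $\rho_i$'s, the exponential decay of $w'$ and $w''$ beats $\Phi$ by enough to absorb the $O(1)$ coefficient. Concretely, as the paper makes explicit, if $r\leq\rho_1^0-\tfrac{\sqrt{2}+\sigma}{\sqrt{2}-\sigma}\eta$ or $r\geq\rho_k^0+\tfrac{\sqrt{2}+\sigma}{\sqrt{2}-\sigma}\eta$, then $w'(r-\rho_j)/\Phi(t,r)\leq C\log|t|/|t|$ for every $j$, so there one can afford an $O(1)$ coefficient; only in the complementary ``central'' band does one need the smallness of $\rho_j'+\tfrac{N-1}{r}$. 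You would need to add this two-region split (or an equivalent observation about $w'/\Phi$ near the ends) to make the slabwise argument rigorous at $j=1$ and $j=k$; once that is inserted, the rest of your proposal goes through and reproduces the paper's proof.
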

\begin{proof}

First we note that
\bal
|\rho_j'(t)+\frac{N-1}{r}|\leq C\frac{\log |t|}{|t|}\quad\text{if}\;\; \rho_1^0-\frac{\sqrt{2}+\xs}{\sqrt{2}-\xs}\eta\leq r\leq \rho_k^0+\frac{\sqrt{2}+\xs}{\sqrt{2}-\xs}\eta,
\eal

\bal
\frac{w'(r-\rho_j^0(t))}{\xF}\leq C\left(\frac{\log |t|}{|t|}\right)\quad\mathrm{if}\;\; r\geq \rho_k^0+\frac{\sqrt{2}+\xs}{\sqrt{2}-\xs}\eta\;\;\text{or}\;\;r\leq \rho_1^0-\frac{\sqrt{2}+\xs}{\sqrt{2}-\xs}\eta,
\eal

and

\ba\label{mainbound}
\frac{w'(r-\rho_j(t))}{\xF}\leq C\left(\frac{\log{|t|}}{|t|}\right)^{-\frac{\xs}{\sqrt{2}}}\quad\forall r>1,
\ea
where $C$ depends only on $\xs,k$ and the constant $c_0$ in \eqref{lemodesimpl1}.  Combining all above, we obtain

\bal
\bigg|\rho_j'(t)+\frac{N-1}{r(y)}\bigg|w'(r-\rho_j(t))+\bigg|\frac{w''(r(y)-\xr_i(t))}{1+y^2}\bigg|\leq C\xF.
\eal
where $C$ depends only on $\xs,k,c_0$ and $N.$

Next assume that
\bal\BAL
\frac{\rho_j^0(t)+\rho_{j-1}^0(t)}{2}\leq r&\leq \frac{\rho_j^0(t)+\rho_{j+1}^0(t)}{2}&&\;\text{if}\;\;j=2,...,k,\\
\text{or}\quad r&\leq \frac{\rho_1^0(t)+\rho_{2}^0(t)}{2}&&\;\text{if}\;\;j=1.
\EAL
\eal
If $i\leq j-1,$ by our assumptions on $\rho_i,$ there exists a uniform constant $C>0$ such that

\bal
|w(r-\rho_i(t))-1|\leq Ce^{\sqrt{2}(-r+\rho_{j-1}^0(t))}.
\eal
Similarly if $i\geq j+1$

\bal
|w(r-\rho_i(t))+1|\leq Ce^{\sqrt{2}(r-\rho_{j+1}^0(t))}.
\eal
We set
\bal
g=\left\{\BAL
&\sum_{i=1}^{j-1}(-1)^{i+1}\left(w(r-\rho_i)-1\right)+\sum_{i=j+1}^k(-1)^{i+1}\left(w(r-\rho_i)+1\right)&&\quad\text{if}\;\;j=2,...,k-1,\\
&\sum_{i=2}^k(-1)^{i+1}\left(w(r-\rho_i)+1\right)&&\quad\text{if}\;\;j=1,\\
&\sum_{i=1}^{k-1}(-1)^{i+1}\left(w(r-\rho_i)+1\right)&&\quad\text{if}\;\;j=k.
\EAL\right.
\eal
Then
\ba\label{213}\BAL
&\left|f\left(g+(-1)^{j+1}w(r-\rho_j(t))\right)-\sum_{i=1}^{k}(-1)^{i+1}f(w(r-\rho_i(t))\right|\\ 
&\leq C\left(\sum_{i=1}^{j-1}|w(r-\rho_i)-1|+\sum_{i=j+1}^k(-1)^{i+1}|w(r-\rho_i)+1|\right).
\EAL
\ea
Combining all above and using the properties of $\rho,$ we can easily reach the desired result.
\end{proof}

\section{The linear problem}
\subsection{Existence} Let $T_0>2,$ $s<t<-T_0$ and $\mathcal{C}_\xF((s,t)\times(0,\infty)$ be the space of continuous functions
with norm
\be
||u||_{\mathcal{C}_\xF((s,t)\times(0,\infty))}=\left|\left|\frac{u}{\xF}\right|\right|_{L^\infty((s,t)\times(0,\infty))},\label{norm11111}
\ee
where $\xF$ has been defined in \eqref{bound}.

\begin{theorem}
Let $s<-T_0$ and $g\in\mathcal{C}_\xF((s,t)\times(0,\infty).$ Then there exists a unique weak solution $u\in\mathcal{C}_\xF((s,-T_0)\times(0,\infty))$ of the problem

\ba\label{mainpro*}\left\{\BAL
v_t&=\frac{(N-1)^2}{(y^2+1)^{\frac{1}{2(N-1)}}}\frac{d}{dy}\left((y^2+1)^{\frac{2N-3}{2(N-1)}}v_y\right)+f'(z(t,r(y)))v+g
\;\;\;\;\;\text{in}\;(s,-T_0)\times(0,\infty)\\
v(t,0)&=v_y(t,0)=0\quad\text{in}\;\;\;(s,-T_0).
\EAL\right.
\ea
Furthermore $v,v_y\in C^{\xa}((s,t))\times[0,\infty)$ for some $\xa\in(0,1)$ and there exists $C>0$ depending only on $N$ and $t-s$ such that

\ba
||v||_{\mathcal{C}_\xF((s,t)\times(0,\infty))}\leq C||g||_{\mathcal{C}_\xF((s,t)\times(0,\infty))}.\label{normest}
\ea
\end{theorem}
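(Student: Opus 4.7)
The plan is to construct $v$ as the limit of solutions $v_R$ on the bounded cylinders $(s,-T_0)\ti(0,R)$, obtained from \eqref{mainpro*} by adjoining a Dirichlet condition $v_R(t,R)=0$ and the initial datum $v_R(s,y)=0$, together with the homogeneous data at $y=0$ prescribed in \eqref{mainpro*}. On each such cylinder the principal part is uniformly parabolic with smooth coefficients and $f'(z)$ is continuous and bounded, so classical linear parabolic theory (e.g.\ Ladyzhenskaya--Solonnikov--Ural'tseva) furnishes a unique $C^{1+\xa/2,2+\xa}$ solution $v_R$. To pass to the limit $R\to\infty$ and to derive the weighted estimate \eqref{normest} simultaneously, the key is to build a comparison barrier on the profile $\xF$ itself.

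The main step is to show that $\bar v(t,y):=C_0\,\|g\|_{\mathcal{C}_\xF}\,e^{\xl(t-s)}\,\xF(t,r(y))$ is a pointwise supersolution of the linear operator in \eqref{mainpro*} for constants $C_0,\xl>0$ depending on $N$, $\xs$ and $k$. Inside each region delimited by consecutive midpoints $(\xr_{j}^0+\xr_{j\pm1}^0)/2$, $\xF$ is a sum of two exponentials $e^{\pm\xs(r-\xr_j^0(t))}$; rewriting the spatial operator in $r$ via $\frac{dr}{dy}=\frac{y}{(N-1)(1+y^2)^{(2N-3)/(2(N-1))}}$, a direct computation shows that it acts on each exponential piece with leading factor $(\xs^2-2)<0$ plus corrections of order $1/r$. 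In the outer regions $f'(z)$ is close to $-2$, reinforcing this negativity; near the transition layers $f'(z)$ departs from $-2$ by an $O(1)$ amount, but the lower bound $\xF(t,r)\gtrsim(\log|t|/|t|)^{\xs/\sqrt{2}}$ implied by \eqref{mainbound} absorbs the loss. The time derivative contributes $\xl\bar v$ plus a drift of order $|(\xr_j^0)'|=O(|t|^{-1/2})$ times $\bar v$, negligible for $|t|$ large, and $\xl$ is chosen large enough to dominate every remaining error. At the seams $r=(\xr_j^0+\xr_{j\pm1}^0)/2$ where $\xF$ is continuous but only Lipschitz, $\bar v$ is the pointwise maximum of two smooth supersolutions, hence a viscosity (equivalently, distributional) supersolution, which suffices for the weak parabolic comparison principle.

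The comparison principle then yields $|v_R|\leq\bar v$ uniformly in $R$, hence \eqref{normest} with $C=C_0 e^{\xl(t-s)}$. Interior parabolic Schauder estimates furnish uniform $C^{1+\xa/2,2+\xa}$ bounds for $v_R$ on compact subsets of $(s,-T_0]\ti[0,\infty)$, so Arzel\`a--Ascoli extracts a subsequence $v_{R_n}\to v$ in $C^{1,2}_{\loc}$ converging to a classical solution of \eqref{mainpro*} that inherits the weighted bound; uniqueness in $\mathcal{C}_\xF$ follows by applying the same barrier argument to the difference of two solutions with $g\equiv 0$. The main obstacle is the barrier verification: one must check the supersolution inequality uniformly across the different regimes — deep outer region, thin layer near each transition $r\approx\xr_j^0(t)$, and the far field $y\to\infty$ where the coefficient $(y^2+1)^{(2N-3)/(2(N-1))}$ grows polynomially in $y$ while $\xF$ decays exponentially in $r(y)\sim y^{1/(N-1)}$ — reconciling these competing behaviours via careful case analysis on the exponential pieces of $\xF$. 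The strict inequality $\xs<\sqrt{2}$ built into the definition of $\xF$ is precisely what makes the stationary operator deliver a strictly negative leading coefficient on each piece, and is therefore indispensable.
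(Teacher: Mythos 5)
Your overall strategy coincides with the paper's: approximate, compare against a barrier built from $\xF$, and pass to the limit. The one genuine difference is in how the barrier is set up, and it is here that your argument contains an error, albeit a reparable one.

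The paper does \emph{not} use $\xF$ itself as a barrier. It uses, for each $j=1,\dots,k$, the \emph{globally smooth} function
\[
\xf_j(t,r)=2\,\|g\|_{\mathcal{C}_\xF}\,e^{l(t-s)}\left(e^{\xs(r-\xr_{j+1}^0(t))}+e^{\xs(-r+\xr_{j-1}^0(t))}\right),
\]
verifies a single supersolution inequality \eqref{calculation} (valid everywhere since each $\xf_j$ has no kinks), and concludes $|v|\le\xf_j$ for every $j$. Because $\xf_j$ equals $2\|g\|_{\mathcal{C}_\xF}e^{l(t-s)}\xF$ on the $j$-th region, combining the $k$ bounds yields \eqref{normest} directly. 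No viscosity or distributional notion of supersolution is ever needed.

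You instead use $\bar v=C_0\|g\|_{\mathcal{C}_\xF}e^{\xl(t-s)}\xF$ and must then face the kinks of $\xF$ at the seams. Your resolution — ``$\bar v$ is the pointwise \emph{maximum} of two smooth supersolutions, hence a viscosity supersolution'' — has the direction reversed: the pointwise maximum of supersolutions is in general \emph{not} a supersolution (that operation preserves \emph{sub}solutions); it is the pointwise \emph{minimum} that preserves supersolutions. Fortunately $\xF$ is in fact the minimum of its two smooth extensions across each seam $r=(\xr_j^0+\xr_{j+1}^0)/2$: one computes there that the left branch has $\partial_r\xF>0$ and the right branch has $\partial_r\xF<0$, so the kink points upward and $\xF$ is a concave corner, i.e.\ the two extensions cross from below. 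Thus your conclusion survives once ``maximum'' is replaced by ``minimum,'' but you should either make that correction explicit and justify the crossing direction, or — cleaner, and exactly what the paper does — abandon $\xF$ as a barrier in favour of the smooth family $\xf_j$, which renders the whole viscosity discussion unnecessary. The rest of your proof (truncated cylinders $v_R$ with zero Dirichlet and initial data, interior Schauder estimates, Arzelà--Ascoli, uniqueness by the same barrier applied to the difference) is the standard unpacking of what the paper compresses into ``standard parabolic theory.''
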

\begin{proof}
Let $l>0,$ $\overline{t}\in (s,-T_0],$ $t\in(s,\overline{t}),$ $\rho_{k+1}^0=\infty$ and $\rho_{0}^0=-\infty.$ Set
\bal
\xf_j(t,r(y))=2 C(g)e^{l (t-s)}\left(e^{\xs\left(r(y)-\rho_{j+1}^0(t)\right)}+e^{\xs\left(-r(y)+\rho_{j-1}^0(t)\right)}\right),
\eal
where
\bal
C(g)=\left|\left|g\right|\right|_{\mathcal{C}_\xF((s,\overline{t})\times(0,\infty))},
\eal
We can easily check that $\xf_j\geq C(g)\xF.$ In addition, there holds

\ba\label{calculation}\BAL
\frac{\partial\xf_j}{\partial t}&-\frac{(N-1)^2}{(y^2+1)^{\frac{1}{2(N-1)}}}\frac{\partial}{\partial y}\left((y^2+1)^{\frac{2N-3}{2(N-1)}}\frac{\partial\xf_j}{\partial y}\right)-f'(z(t,r(y)))\xf_j\\
&=\frac{\partial\xf_j}{\partial t}-\frac{y^2}{1+y^2}\frac{\partial^2\xf_j}{\partial r^2}-\frac{N-1}{r(y)} \frac{\partial\xf_j}{\partial r}-f'(z(t,r(y)))\xf_j\\
&\geq\xf_j\bigg(l+\xs(-\rho_{j+1}^{0'}+\rho_{j-1}^{0'})-\frac{\xs^2y^2}{1+y^2}-\frac{\xs(N-1)}{r(y)}-f'(z(t,r(y)))\bigg)\geq \xf_j,
\EAL
\ea
provided $l>0$ is large enough. Here $l$ depends only on $\xs,N$ and $k.$  Hence, using $\xf_j$ like a barrier and standard parabolic theory, we may prove the existence of a solution of \eqref{mainpro*} such that $v,v_y\in C^{\xa}((s,-T_0))\times[0,\infty)$ for some $\xa\in(0,1)$ as well as
\bal
|v(t,y)|\leq  \xf_j(t,r(y)) \quad\text{for}\;j=1,...,k.
\eal
The desired result follows easily by the above inequality.
\end{proof}

\subsection{A priori estimates for the solution of the problem (\ref{mainpro*})}

 We will establish in this subsection a priori estimates for the
solutions  $\psi^s$ of (\ref{mainpro*}) that are independent on $s.$
\begin{lemma}

Let  $g\in \mathcal{C}_\xF\left((s,-T)\times(0,\infty)\right)$ and $\psi^s\in \mathcal{C}_\xF\left((s,-T_0)\times(0,\infty)\right)$ be a solution of the problem (\ref{mainpro*}) which satisfies the orthogonality conditions

\be
\int_0^\infty \psi^s(t,y)w'(r(y)-\rho_i(t))r(y)dy=0,\qquad\forall i=1,...,k,\;s<t<-T_0. \label{cond}
\ee

Then there exists a uniform constant $T_1>0$ such that for any $t\in (s,-T_1],$ the following estimate is valid
\be
||\psi^s||_{\mathcal{C}_\xF\left((s,t)\times(0,\infty)\right)}\leq C\left(||g||_{\mathcal{C}_\xF\left((s,t)\times(0,\infty)\right)}\right),\label{estfix}
\ee
where $C>0$ is a uniform constant.\label{mainlem}
\end{lemma}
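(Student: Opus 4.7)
The estimate will be established by a contradiction/blow-up argument, in the spirit of the analogous lemma in \cite{gklpino1}. Assume the conclusion fails; then there exist sequences $s_n\to-\infty$, $t_n\in(s_n,-T_1]$, data $g_n$ with $\|g_n\|_{\mathcal{C}_\xF((s_n,t_n)\times(0,\infty))}\to 0$, and solutions $\psi_n:=\psi^{s_n}$ with vanishing initial datum at $t=s_n$, satisfying \eqref{cond}, and $\|\psi_n\|_{\mathcal{C}_\xF((s_n,t_n)\times(0,\infty))}=1$. Select $(\tau_n,y_n)\in(s_n,t_n]\times(0,\infty)$ so that $|\psi_n(\tau_n,y_n)|\geq\tfrac12\xF(\tau_n,r(y_n))$. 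Up to a subsequence, exactly one of the following scenarios occurs: \textbf{(a)} $\min_{1\leq j\leq k}|r(y_n)-\xr_j(\tau_n)|$ stays bounded, so the maximising point lies within $O(1)$ of some layer $\xr_{j_0}$; or \textbf{(b)} $\min_j|r(y_n)-\xr_j(\tau_n)|\to\infty$, so the point lies in the bulk between layers.

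In case \textbf{(a)}, set $s_n^{\ast}:=r(y_n)-\xr_{j_0}(\tau_n)\to s^{\ast}\in\BBR$ and consider the rescaling
\bal
\tl\psi_n(\tau,s):=\frac{\psi_n\bigl(\tau_n+\tau,\; r^{-1}(\xr_{j_0}(\tau_n)+s)\bigr)}{\xF(\tau_n,\xr_{j_0}(\tau_n))}.
\eal
Since $\tau_n\to-\infty$ forces $\xr_{j_0}(\tau_n)\to\infty$, the coefficient $y^2/(y^2+1)$ in \eqref{chvar} converges to $1$ locally uniformly in $(\tau,s)$, the first-order term $(N-1)/r(y)$ tends to $0$, and $f'\bigl(z(\tau_n+\tau,\xr_{j_0}(\tau_n)+s)\bigr)\to f'(w(s))$; moreover the boundary $y=0$ is pushed out to $s=-\infty$ and plays no role in the limit. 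Parabolic regularity together with the uniform bound furnished by $\|\psi_n\|_{\mathcal{C}_\xF}=1$ and a diagonal extraction produce a bounded limit $\tl\psi\in L^\infty(\BBR\times\BBR)$ solving
\bal
\tl\psi_\tau=\tl\psi_{ss}+f'(w(s))\,\tl\psi\quad\mathrm{in}\;\BBR\times\BBR,\qquad |\tl\psi(0,s^{\ast})|\geq\tfrac12.
\eal
Performing the change of variables $y\mapsto s$ in \eqref{cond} with $i=j_0$ and dividing by the common factor of order $\xr_{j_0}(\tau_n)^{N-1}\,\xF(\tau_n,\xr_{j_0}(\tau_n))$, the orthogonality passes to the limit as $\int_{\BBR}\tl\psi(\tau,s)\,w'(s)\,ds=0$ for every $\tau\in\BBR$ (the remaining orthogonalities with $i\neq j_0$ become trivial since $w'(s+\xr_{j_0}(\tau_n)-\xr_i(\tau_n))$ decays exponentially). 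A Liouville-type statement for the operator $L:=\partial_{ss}^2+f'(w)$, whose $L^2(\BBR)$-kernel equals $\mathrm{span}(w')$ while the rest of its spectrum lies to the left of a strictly negative constant, then rules out any such nontrivial bounded $\tl\psi$: projecting onto $w'(s)^{\perp}$ and running a forward/backward Gronwall argument with the spectral gap forces $\tl\psi\equiv 0$, a contradiction.

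In case \textbf{(b)}, $f'(z)\to -2$ uniformly on each bulk region $\{\min_j|r-\xr_j|\geq R\}$ provided $R$ is taken large. For $T_1$ large enough so that the layer speeds $|\xr_{j\pm 1}^{0\prime}|$ are uniformly small, the super-solution computation \eqref{calculation} with $l=0$ shows that $\xF$ itself is a super-solution of the homogeneous linear problem on each bulk component, up to controlled contributions entering through the near-layer strips. Combining this with the bound on the strips already obtained in case \textbf{(a)}, and applying the parabolic maximum principle (using $\psi_n(s_n,\cdot)\equiv 0$ to exclude any contribution from the initial time), one concludes $|\psi_n(\tau_n,y_n)|\leq C\bigl(\|g_n\|_{\mathcal{C}_\xF}+o(1)\bigr)\xF(\tau_n,r(y_n))$, again contradicting $|\psi_n(\tau_n,y_n)|\geq\tfrac12\xF(\tau_n,r(y_n))$. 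The main obstacle is the Liouville step in case \textbf{(a)}: one must exclude \emph{all} nontrivial bounded entire solutions of the parabolic linearised Allen-Cahn equation on $\BBR^2$ that are $L^2$-orthogonal to $w'(s)$ at each time, with no a priori decay as $\tau\to\pm\infty$. This is where the spectral gap of $L$ on $w'(s)^{\perp}$ is used in an essential way.
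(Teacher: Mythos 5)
Your overall strategy is the same as the paper's: a contradiction/blow-up argument near each layer, passage to the one-dimensional limiting parabolic operator $\partial_\tau - \partial_{ss}^2 - f'(w)$, use of the spectral gap of $\partial_{ss}^2 + f'(w)$ on $w'^{\perp}$ to rule out the limit, and a comparison/barrier argument in the bulk where $f'(z)\to -2$. However there are two points at which the proposal as written has real gaps.

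First, and most importantly, you assert that the rescaled functions converge to a limit $\tilde\psi\in L^\infty(\BBR\times\BBR)$ and then run an $L^2$ energy/Gronwall argument against the spectral gap. But the normalization $\|\psi_n\|_{\mathcal{C}_\xF}=1$ only yields, after dividing by $\xF$ at the maximising point, the \emph{exponentially growing} bound $|\tilde\psi(\tau,s)|\leq Ce^{\xs|s|}$ (since $\xF$ itself grows like $e^{\xs|r-\rho_{j_0}|}$ away from the layer). The limit is therefore not a priori in $L^\infty$, let alone in $L^2(\BBR)$ at each time, and the $L^2$ projection onto $w'^\perp$ together with the forward/backward Gronwall argument is not justified as stated. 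The paper inserts an intermediate decay estimate (its Step 3): using that $-f'(w(\cdot+x_0))$ is bounded below by a positive constant for $|x|$ large, it builds the barrier $G=\tilde M e^{-\tilde\xs|x|}+\xe e^{\frac{\xs+\sqrt2}{2}|x|}$ on $\{|x|\geq M\}$, sends $\xe\to0$, and upgrades $|\tilde\psi|\leq Ce^{\xs|x|}$ to $|\tilde\psi|\leq Ce^{-\tilde\xs|x|}$; only then is $a(t)=\int_\BBR|\tilde\psi(t,x)|^2\dx$ finite and uniformly bounded, so that $a'(t)\leq-2ca(t)$ together with $a(0)>0$ yields the contradiction. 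Without this decay step your Liouville argument does not close.

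Second, a smaller structural issue: in case (b) you appeal to ``the bound on the strips already obtained in case (a),'' but case (a) was run under the hypothesis that the maximising point lies in a strip, which is precisely what is being negated in case (b). What the barrier argument in the bulk actually needs is that the $\xF$-normalized sup over each near-layer strip $A_{j,R}$ tends to $0$ along the contradiction sequence, as a statement in its own right (the paper's Assertion 1), independently of where the maximiser sits; the blow-up argument must be run on the strips regardless, not conditionally. This is an organizational fix rather than a wrong idea, but as written the logic is circular.
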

\begin{proof}
Set
\bal
A_{j,R}^{(s,t)}=\left\{(\tau,y)\in(s,t)\times(0,\infty):\;|r(y)-\rho_j^0(\tau)|<R+1\right\}
\eal
and
\bal
A_j^{(s,t)}=\left\{(\tau,y)\in(s,t)\times(0,\infty):\;\max\left(\frac{\rho_j^0(\tau)+\rho_{j-1}^0(\tau)}{2},1\right)< r(y)< \frac{\rho_j^0(\tau)+\rho_{j+1}^0(\tau)}{2}\right\},
\eal
with $\xr_0^0=-\infty$ and $\rho_{k+1}^0=\infty.$

We will prove \eqref{estfix} by contradiction. Let $\{s_i\},\;\{\overline{t}_i\}$ be sequences such that $s_i< \overline{t}_i\leq -T_0.$  We also assume that there exists $g_{i}$ such that $\psi_i$  solve (\ref{mainpro*}) with $s=s_i,$  $g=g_i$ and satisfies \eqref{cond}. Finally we assume that
\ba\label{contr}
\left|\left|\psi_i\right|\right|_{\mathcal{C}_\xF((s_i,\overline{t}_i)\times(0,\infty))}=1\quad\text{and}\quad
\left|\left|g_i\right|\right|_{\mathcal{C}_\xF((s_i,\overline{t}_i)\times(0,\infty))}
\rightarrow0.
\ea

First we note that, by \eqref{normest}, we may assume that

\ba\label{assumption1}
s_i+1<\overline{t}_i,\quad s_i\downarrow-
\infty\quad\text{and}\quad\overline{t}_i\downarrow-\infty
\ea
To reach a contradiction, we need the following assertion.

\medskip

\noindent\textbf{Assertion 1.} \emph{Let $R>0$ then we have}
\be
\lim_{i\rightarrow\infty}\left|\left|\frac{\psi_i}{\xF}\right|\right|_{L^\infty(A_{j,R}^{(s_i,\overline{t}_i)})}=0\;\;\;\;\;\forall j=1,...,k.\label{contr2}
\ee

\medskip

Indeed, let us first assume that \eqref{contr2} is valid. Set
\bal
\xm_{i,j}:=\left|\left|\frac{\psi_i}{\xF}\right|\right|_{L^\infty(A_{j,R}^{(s_i,\overline{t}_i)})}\longrightarrow_{i\rightarrow\infty}0\;\;\;\;\;\forall j=1,...k.
\eal
Let $j\in\{1,...,k\}$ and

\bal
\frac{\rho_j^0(t)+\rho_{j-1}^0(t)}{2}&\leq r(y)\leq \frac{\rho_j^0(t)+\rho_{j+1}^0(t)}{2}.
\eal
By our assumptions on $\rho_i,$ we have 

\bal
|w(r-\rho_n(t))-1|\leq 2e^{\sqrt{2}(-r+\rho_{n}(t))}&\leq e^{-\frac{\sqrt{2}}{2}(\rho_{j}-\rho_{j-1}(t))} \leq C\left(\frac{\log|t|}{|t|}\right)^{\frac{1}{2}}\quad\text{if}\;\;1\leq n\leq j-1,\\
|w(r-\rho_n(t))+1|\leq 2e^{\sqrt{2}(r-\rho_{n}(t))}&\leq C\left(\frac{\log|t|}{|t|}\right)^{\frac{1}{2}}\quad\text{if}\;\; j+1\leq n\leq k
\eal
and
\bal
|w(r-\rho_j(t))|\geq w(R+1) \quad\text{if}\;\;|r-\rho_j(t)|>R+1.
\eal
Combining the last three displays, we may show that for any $0<\xe<2$ there exists $i_0\in \mathbb{N}$ and $R>0$ such that

\be
-f'(z(r,x))\geq 2-\xe,\;\;\forall t\leq \overline{t}_i,\;\;x\in \mathbb{R}\setminus\cup_{j=1}^kA_{j,R}^{(s_i,t_i)}\;\;\; \text{and}\;\;\;i\geq i_0.\label{fr11}
\ee

Next, consider the function
\bal
\overline{\xf}_{i,j}(t,r)&=2\left(e^{\xs\left(r-\rho_{j+1}^0(t)\right)}+e^{\xs\left(-r+\rho_{j-1}^0(t)\right)}+e^{\xs\left(4\tilde M-r+\rho_{2}^0(t)\right)}\right)\\
&\times\left(\left|\left|g_{i}\right|\right|_{\mathcal{C}_\xF((s_i,\overline{t}_i)\times(0,\infty))}+
\sup_{1\leq j\leq k}\xm_{i,j}\right)
\eal
and note that

\bal
\psi_i(t,y)\leq \xF(t,r(y))\sup_{1\leq j\leq k}\xm_{i,j}\leq \overline{\xf}_{i,j}(t,r(y)),\quad\forall\; (t,y)\in \overline{\cup_{j=1}^kA_{j,R}^{(s_i,\overline{t}_i)}}.
\eal

Now, let $\xe>0,\;\tilde{M}>1$ be such that $2-\xe^2>\xs^2+\frac{N-1}{\tilde{M}}$ and $M_0=\min(2-\xe^2-\xs^2+\frac{N-1}{\tilde{M}},1).$ In view of \eqref{calculation} and \eqref{fr11}, we may deduce that there exists $i_0\in\BBN$ such that

\bal
\frac{\partial\overline{\xf}_{i,j}}{\partial t}&-\frac{(N-1)^2}{(y^2+1)^{\frac{1}{2(N-1)}}}\frac{\partial}{\partial y}\left((y^2+1)^{\frac{2N-3}{2(N-1)}}\frac{\partial\overline{\xf}_{i,j}}{\partial y}\right)-f'(z(t,r(y)))\overline{\xf}_{i,j}\geq \frac{M_0}{2}\overline{\xf}_{i,j}\geq\frac{M_0}{2} g_i\\
&\quad\forall\; (t,x)\in ((s_i,\overline{t}_i)\times(0,\infty))\setminus\cup_{j=1}^kA_{j,R}^{(s_i,\overline{t}_i)},\;\;j=1,...k\;\;\;\text{and}\;\;i\geq i_0.
\eal
Hence, we may use $\overline{\xf}_{i,j}$ like a barrier to obtain

\bal
|\psi_i(t,y)|\leq \frac{2}{M_0}|\overline{\xf}_{i,j}(t,r(y))|,\;\;\forall(t,y)\in ((s_i,\overline{t}_i)\times(0,\infty))\setminus\cup_{j=1}^kA_{j,R}^{(s_i,\overline{t}_i)},\;\;j=1,...k,\;\;\;i\geq i_0.
\eal
This implies 
\bal
1=\left|\left|\psi_i\right|\right|_{\mathcal{C}_\xF((s_i,\overline{t}_i)\times(0,\infty))}\leq
\frac{4}{M_0}\left(\left|\left|g_i\right|\right|_{\mathcal{C}_\xF((s_i,\overline{t}_i)\times(0,\infty))}
+\sup_{1\leq j\leq k}\xm_{i,j}\right),
\eal
which is clearly a contradiction if we choose $i$ large enough.

\medskip

\noindent\textbf{Proof of Assertion 1.} We will prove Assertion 1 by contradiction in four steps.

\medskip

Let us give first the contradict argument and some notations.  Set
\bal
\tilde{\psi}_i(t,r)=\psi_i(t,\sqrt{r^{2(N-1)}-1})\quad \text{and}\quad \tilde g_i(t,r)=g_i(t,\sqrt{r^{2(N-1)}-1}).
\eal
Then, by \eqref{chvar}, we have
\bal
\frac{\partial\tilde{\psi}_i(t,r)}{\partial t}-\frac{r^{2(N-1)}-1}{r^{2(N-1)}}\frac{\partial^2\tilde{\psi}_i(t,r)}{\partial r^2}-\frac{N-1}{r} \frac{\partial\tilde{\psi}_i(t,r)}{\partial r}&-f'(z(t,r))\tilde{\psi}_i(t,r)\\
&=\tilde g_i(t,r)\quad\text{in}\;(s_i,\overline{t}_i)\times(1,\infty)
\eal

We assume that (\ref{contr2}) is not valid. Then there exists $j\in\{1,...,k\}$ and $\xd>0$ such that
\bal
\left|\left|\frac{\tilde\psi_i}{\xF}\right|\right|_{L^\infty(A_{j,R}^{(s_i,\overline{t}_i)})}>\xd>0,\;\;\forall i\in\mathbb{N}.
\eal
Let $(t_i,r_i)\in A_{j,R}^{(s_i,\overline{t}_i)}$ such that
\be
\left|\frac{\tilde\psi_i(t_i,r_i)}{\xF(t_i,r_i)}\right|>\xd.\label{667}
\ee
In addition, by definition of $\xF,$ there holds
\be
\xF(t_i,r_i)=e^{\xs(-r_i+\rho_{j-1}(t_i))}+e^{\xs(r_i-\rho_{j+1}(t_i))}.\label{obs1}
\ee

Next, set $r_i=x_i+\rho_j(t_i)$ and
\bal
\xf_i(t,x)=\frac{\tilde\psi_i(t+t_i,x+x_i+\rho_j(t+t_i))}{\xF(t_i,x_i+\rho_j(t_i))}.
\eal
Then $\xf_i$ satisfies
\ba\label{eq11}\left\{\BAL
&(\xf_i)_t=\; \frac{(x+x_i+\rho_j(t+t_i))^{2(N-1)}-1}{(x+x_i+\rho_j(t+t_i))^{2(N-1)}}(\xf_i)_{xx}+\frac{N-1}{x+x_i+\rho_j(t+t_i)}(\xf_i)_x\\
&+\rho_j'(t+t_i)(\xf_i)_x+f'(z(t+t_i,x+x_i+\rho_j(t+t_i)))\xf_i\\ &+\frac{g_i(t+t_i,x+x_i+\rho_j(t+t_i))}{\xF(t_i,x_i+\rho_j(t_i))} && \mathrm{in}\;\;\;\xG^{(s_i,t_i)}_j\\
&\xf_i(s_i-t_i,x-x_i-\rho_j(t+t_i))=0,&&\mathrm{in}\;(1,\infty),
\EAL\right.
\ea
where
\bal
\xG^{(s_i,t_i)}_j=\{(t,x)\in(s_i-t_i,0]\times\mathbb{R}:\;1-x_i-\rho_j(t+t_i)<x\}.
\eal
Also set
\ba
\nonumber
&B_{t_i,n,j}=\Bigg\{(t,x)\in(s_i-t_i,0]\times\mathbb{R}:
\;\max\left(\frac{\rho_n^0(t+t_i)+\rho_{n-1}^0(t+t_i)}{2},1\right)-\rho_j(t+t_i)-x_i\\ \nonumber
&\leq x\leq
 \frac{\rho_n^0(t+t_i)+\rho_{n+1}^0(t+t_i)}{2}-\rho_j(t+t_i)-x_i\Bigg\}\quad\text{for}\;\;n=1,...,k,
\ea
and
\bal
B_{t_i,n,j}^M=B_{t_i,n,j}\cap\left\{(t,x)\in(s_i-t_i,0]\times\mathbb{R}: |x+\rho_j(t+t_i)+x_i-\rho_n^0(t+t_i)|>M\right\},
\eal
where $n=1,....,k$ and $M>0.$
We note here that $|x_i|<R+1$ for any $i\in\mathbb{N}$ and  $|\xf_i(0,0)|=\left|\psi_i(t_i,y_i)/\xF(t_i,y_i)\right|>\xd>0.$
Finally, by \eqref{normest} and \eqref{contr}, we have that
\bal
\liminf t_i-s_i>\infty.
\eal

In the sequel, assume that $x_i\rightarrow x_0\in B_{R+1}(0)$ and $\lim_{i\rightarrow\infty} t_i-s_i=\infty$ (otherwise take a subsequence).

\medskip

\noindent\textbf{Step 1} We assert that
$\xf_i\rightarrow\xf$ locally uniformly, $\xf(0,0)>\xd$  and $\xf$ satisfies
\be
\xf_t=\; \xf_{xx} +f'(w(x+x_0))\xf,\qquad \mathrm{in}\;\;\;(-\infty,0]\times\mathbb{R}.\label{equ1}
\ee

\medskip

Let $(t,x)\in B_{t_i,n,j}$ and $1\leq n\leq k.$ By \eqref{forma4}, \eqref{contr} and \eqref{obs1} we have that
\ba\BAL
&|\xf_i(t,x)|\leq \left|\frac{\psi_i(t+t_i,x+x_i+\rho_j(t+t_i))}{\xF(t_i,x_i+\rho_j(t_i))}\right|\leq \left|\frac{\xF(t+t_i,x+x_i+\rho_j(t+t_i))}{\xF(t_i,x_i+\rho_j(t_i))}\right|\\
&\;\;\leq C_0(\xb,||h||_{L^\infty},\sup_{1\leq j\leq k}|\xg_j|,\xs,R)\left(\frac{|t_i|\log|t+t_i|}{|t+t_i|\log|t_i|}\right)^{\frac{\xs}{\sqrt{2}}}e^{\xs|x+\rho_j(t+t_i)-\rho_n(t+t_i)|},
\EAL\label{constant}
\ea
for any $i\in\mathbb{N}$ and $(t,x)\in B_{t_i,n,j}.$ In addition, there holds
\bal
\cup_{i=1}^\infty B_{t_i,j,j}=(-\infty,0]\times\mathbb{R}.
\eal
The desired result follows by the last two displays and standard regularity results for parabolic equations.

\medskip
\noindent\textbf{Step 2} In this step, we prove the following orthogonality condition 

\be
\int_{\mathbb{R}}\xf(t,x)w'(x+x_0)dx=0\;\;\;\forall t\in (-\infty,0].\label{oth}
\ee

By assumption, we have that
\be
\int_0^\infty \psi_i(t+t_i,y)w'(r(y)-\rho_\ell(t+t_i))r(y)dy=0\qquad\forall \ell=1,...,k,\;s_i-t_i<t\leq0. \label{cond2}
\ee
By change of variables with  $r=(y^{2}+1)^{\frac{1}{2(N-1)}}$, we obtain
\bal
0&=\int_0^\infty \psi_i(t+t_i,y)w'(r(y)-\rho_\ell(t+t_i))r(y)dy\\
&=(N-1)\int_1^\infty \tilde\psi_i(t+t_i,r)w'(r-\rho_\ell(t+t_i))\frac{r^{2(N-1)}}{\sqrt{r^{2(N-1)}-1}}dr.
\eal
This implies 
\bal\int_{1-x_i-\xr_j(t+t_i)}^\infty \xf_i(t,x)w'(x+x_i+\rho_j(t+t_i)-\rho_\ell(t+t_i))\frac{(x+x_i+\rho_j(t+t_i))^{2(N-1)}}{\sqrt{(x+x_i+\rho_j(t+t_i))^{2(N-1)}-1}}dx=0,
\eal
for any $t\in (s_i-t_i,0]$ and $\ell=1,...,k.$

Let $t\in \cap_{i=i_0}^\infty(s_i-t_i,0],$ for some $i_0\in\mathbb{N}$ and

\bal
\nonumber
B_{t,t_i,n,j}&=\Bigg\{x\in\mathbb{R}:
\;\max\left(\frac{\rho_n^0(t+t_i)+\rho_{n-1}^0(t+t_i)}{2},1\right)-\rho_j(t+t_i)-x_i\\ \nonumber
&\qquad\qquad\leq x\leq
 \frac{\rho_n^0(t+t_i)+\rho_{n+1}^0(t+t_i)}{2}-\rho_j(t+t_i)-x_i\Bigg\} \quad \text{for}\;\;n=1,...,k.
\eal

By \eqref{constant}, there exists a positive constant, which is independent of $x\in\BBR$ and $i,$ such that for any $x\in \BBR$ there hold
\ba\label{est2}\BAL
\chi_{B_{t,t_i,j,j}}(x)\left|\xf_i(t,x)w'(x+x_i)\frac{(x+x_i+\rho_j(t+t_i))^{N-1}}{\sqrt{(x+x_i+\rho_j(t+t_i))^{2(N-1)}-1}}\right|\leq Ce^{-(\sqrt{2}-\xs)|x|}\quad\forall j=2,...,k 
\EAL
\ea
and

\ba\label{est3}\BAL
&\chi_{B_{t,t_i,1,1}\cap\{x>2-x_i-\xr_1(t+t_i)\}}(x)\left|\xf_i(t,x)w'(x+x_i)\frac{(x+x_i+\rho_1(t+t_i))^{N-1}}{\sqrt{(x+x_i+\rho_1(t+t_i))^{2(N-1)}-1}}\right|\\
&\qquad\leq Ce^{-(\sqrt{2}-\xs)|x|}.
\EAL
\ea
In addition, by (\ref{constant}), we can easily prove that

\ba\label{est2a}
\lim_{i\to\infty}\int_{1-x_i-\xr_1(t+t_i)}^{2-x_i-\xr_1(t+t_i)} |x|^\xa \left|\xf_i(t,x)w'(x+x_i)\frac{(x+x_i+\rho_1(t+t_i))^{N-1}}{\sqrt{(x+x_i+\rho_1(t+t_i))^{2(N-1)}-1}}\right|dx=0
\ea
for any $\xa\in\{1,...,N-1\}.$

Let  $n>j.$ By \eqref{constant}, the assumptions on $\rho$ and the fact that $|x_i|<R+1,$ we have

\ba\BAL
&\int_{B_{t,t_i,n,j}}|x|^\xa\left|\xf_i(t,x)w'(x+x_i)\frac{(x+x_i+\rho_j(t+t_i))^{N-1}}{\sqrt{(x+x_i+\rho_j(t+t_i))^{2(N-1)}-1}}\right|dx\\
&\leq C_0 \int_{{\frac{\rho_n^0(t+t_i)+\rho_{n-1}^0(t+t_i)}{2}-\rho_j(t+t_i)-x_i}}^{\frac{\rho_n^0(t+t_i)+
\rho_{n+1}^0(t+t_i)}{2}-\rho_j(t+t_i)-x_i}|x|^\xa e^{-\sqrt{2}x+\xs|x+\rho_j(t+t_i)-\rho_n(t+t_i)|}dx\\
&\leq C (\log|t+t_i|)^{\xa}\left(\frac{\log|t+t_i|}{|t+t_i|}\right)^{\frac{(\sqrt{2}-\xs)}{2\sqrt{2}}}\rightarrow_{i\rightarrow\infty}0.
\EAL
\label{est1}
\ea
Similarly the estimate \eqref{est1} is valid for $n=1,...,j-1.$

By (\ref{est2})-(\ref{est1}), we have that
\bal
0&=\lim_{i\rightarrow\infty}\frac{1}{(\rho_j(t+t_i))^{N-1}}
\int_{1-x_i-\rho_j(t+t_i)}^\infty\xf_i(t,x)w'(x+x_i)\frac{(x+x_i+\rho_j(t+t_i))^{2(N-1)}}{\sqrt{(x+x_i+\rho_j(t+t_i))^{2(N-1)}-1}}dx\\
&=\int_{\mathbb{R}}\xf(t,x)w'(x+x_0)dx
\eal
and the proof of this assertion follows.

\medskip
\noindent\textbf{Step 3} In this step we prove the following assertion:

\medskip

 \emph{Let $0<\tilde\xs<\sqrt{2}.$ Then there exists $C=C(R,\tilde\xs,\xs,k)>0,$ such that}

\be
|\xf(t,x)|\leq Ce^{-\tilde \xs|x|}\quad\forall(t,x)\in(-\infty,0]\times \mathbb{R}.\label{bound1}
\ee

\medskip

By \eqref{constant}, we have that

\be
|\xf(t,x)|\leq Ce^{\xs|x|},\;\;\forall(t,x)\in(-\infty,0]\times \mathbb{R}.\label{bound1b}
\ee

Let $\xe>0$ and let $M>$ be large enough such that $-f'(w(x+x_0))\geq \max(\tilde\xs, \frac{\xs+\sqrt{2}}{2})^2$ for any $|x|>M.$ We set $G(x)=\tilde M e^{-\tilde\xs |x|}+\xe e^{\frac{\xs+\sqrt{2}}{2}|x|}.$ By \eqref{bound1}, there exists $\tilde M$ large enough such that

\bal
\max(|\xf(t,x)|-\tilde M e^{-\tilde\xs x},0)=0\quad\forall (t,x)\in(-\infty,0]\times\{|x|=M\}.
\eal
Combining all above, we may invoke the maximum principle to infer 

\bal
|\xf(t,x)|\leq \tilde M e^{-\tilde\xs x}+\xe e^{\frac{\xs+\sqrt{2}}{2}|x|}\quad \forall (t,x)\in(-\infty,0]\times\{|x|\geq M\}\;\text{and}\;\xe>0.
\eal
Letting $\xe\to0^+,$ we obtain the desired result.

\medskip
\noindent\textbf{ Step 4}
In this step we prove assertion \eqref{contr2}.

\medskip

Consider the Hilbert space \bal H=\{\xz\in H^1(\mathbb{R}):\;\int_{\mathbb{R}}\xz(x)w'(x)dx=0.\}\eal
Then it is well known that there exists $c>0$ such that the following inequality is valid
\be
\int_{\mathbb{R}}|\xz'(x)|^2-f'(w)|\xz|^2\geq c\int_{\mathbb{R}}|\xz(x)|^2dx \quad\forall\xf(x)\in H\cap L^2(\mathbb{R}).\label{Heq}
\ee
Thus if we multiply (\ref{equ1}) by $\xf$ and integrate with respect $x$ we have
\bal
0&=\frac{1}{2}\int_{\mathbb{R}}(\xf^2)_tdx+\int_{\mathbb{R}}|\xf_x|^2dx-f'(w(x))|\xf|^2dx\\
&\geq\frac{1}{2}\int_{\mathbb{R}}(\xf^2)_tdx+c\int_{\mathbb{R}}|\xf(t,x)|^2dx.
\eal

Set $a(t)=\int_{\mathbb{R}}|\xf(t,x)|^2dx,$ then
\bal
a'(t)\leq-2ca(t)\Rightarrow a(t)\geq a(0)e^{2c|t|},
\eal
which is a contradiction, since $a(0)>0$ by Step 1 and 
\bal
||e^{\xs|x|}\xf||_{L^\infty((-\infty,0]\times\mathbb{R})}<C.
\eal

\end{proof}

\subsection{Problem \eqref{mainpro*} with $g(t,r)=h(t,r)- \sum_{j=1}^kd_j(t)w'(r-\rho_j(t))$} Let $T\geq 2.$
In this subsection, we study the following problem.

\ba\left\{\BAL
\psi_t&=\frac{(N-1)^2}{(y^2+1)^{\frac{1}{2(N-1)}}}\frac{d}{dy}\left((y^2+1)^{\frac{2N-3}{2(N-1)}}\psi_y\right)+f'(z(t,r(y)))\psi &&\\
&\qquad\qquad+h(t,y)-\sum_{j=1}^k d_i(t)w'(r-\rho_j(t)),\quad &&\mathrm{in}\;\;\;(s,-T]\times(0,\infty),\\
\psi(s,0)&=\psi_y(s,0)=0\quad&&\text{in}\;\;\;(s,-T]
\EAL\right.\label{proci}
\ea
where $h\in\mathcal{C}_\xF((s,-T)\times(0,\infty))$ and $d_i(t)$ satisfies the following (nearly diagonal) system
\ba\BAL
&\sum_{i=1}^kd_i(t)\int_0^\infty w'(r(y)-\xr_i(t))w'(r(y)-\xr_j(t))r(y)dy=-\xr_j'(t)\int_0^\infty \psi w''(r(y)-\xr_j(t))r(y)dy\\
&\qquad\qquad+(N-1)\int_0^\infty\psi(t,y)\left(w''(r(y)-\xr_j(t))y\right)_y dy\\
&\qquad\qquad+\int_0^\infty f'(z(t,r(y)))\psi(t,y)w'(r(y)-\xr_j(t))r(y)dy\\
&\qquad\qquad+\int_0^\infty h(t,r)w'(r(y)-\xr_j(t))r(y)dy\qquad\qquad\quad\forall\; j=1,...,k\quad\text{and} \;\;s<t<-T.
\EAL
\label{ci(t)}
\ea
We note here that if $\psi$ is a solution of \eqref{proci} and $d_i(t)$ satisfies the above system then $\psi$ satisfies the orthogonality conditions
\ba\label{orthcond}
\int_{0}^\infty\psi(t,r)w'(r(y)-\rho_i(t))r(y)dy=0\quad\forall i=1,...,k\;\text{and}\;s<t<-T.
\ea

\medskip
The main result of this subsection is the following
\begin{lemma}
Let $h\in\mathcal{C}_\xF((s,-T)\times[0,\infty))$. There exist a uniform constant $T_0\geq T\geq2,$ and a unique solution  $\psi^s$ of problem \eqref{proci}. Furthermore, we have that $\psi^s$ satisfies the orthogonality conditions and the following estimate
 \ba
||\psi^s||_{\mathcal{C}_\xF((s,t)\times(0,\infty))}\leq C||h||_{\mathcal{C}_\xF((s,t)\times(0,\infty))}\quad\forall t\in (s,-T_0),\label{estfixmef}
\ea
where $C>0$ is a uniform constant.\label{cilemma*}
\end{lemma}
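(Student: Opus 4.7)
The plan is to reformulate the problem \eqref{proci}-\eqref{ci(t)} as a Banach fixed-point equation for $\psi$ alone, eliminating $d(t)$ via the algebraic system \eqref{ci(t)}, and to close it using the existence theorem of Subsection~3.1 together with the a priori bound of Lemma~\ref{mainlem}.

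First I would verify that for $T_0$ large the system \eqref{ci(t)} is uniquely solvable for the vector $d(t) = (d_1(t),\ldots,d_k(t))$ as a linear functional of $\psi$ and $h$. The coefficient matrix $M(t)$ with entries
\bal
M_{ij}(t) = \int_0^\infty w'(r(y)-\rho_i(t))\, w'(r(y)-\rho_j(t))\, r(y)\, dy
\eal
is diagonally dominant: the spatial separation $\rho_{j+1}-\rho_j \sim \eta \sim \frac{1}{\sqrt{2}}\log(|t|/\log|t|)$ grows without bound, making off-diagonal entries exponentially small in $|t|$ while diagonal entries stay bounded below, so $M(t)$ is invertible on $(-\infty,-T_0]$. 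The remaining integrals on the right-hand side of \eqref{ci(t)} localize near $r=\rho_j$, where $\xF$ has size $\sim (\log|t|/|t|)^{\xs/\sqrt{2}}$, yielding a bound
\bal
|d_j(t)| \leq C\, \xd(t)\, \bigl(\|\psi\|_{\mathcal{C}_\xF((s,t)\times(0,\infty))} + \|h\|_{\mathcal{C}_\xF((s,t)\times(0,\infty))}\bigr),
\eal
for some $\xd(t)\to 0$ as $t\to -\infty$. Re-expressing $\sum_j d_j w'(\cdot-\rho_j)$ in the $\mathcal{C}_\xF$-norm via \eqref{mainbound} gives $\|\sum_j d_j w'(\cdot-\rho_j)\|_{\mathcal{C}_\xF} \leq C\xd_1(T_0)(\|\psi\|_{\mathcal{C}_\xF}+\|h\|_{\mathcal{C}_\xF})$ with $\xd_1(T_0)\to 0$ as $T_0\to\infty$.

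Next I would define $\CT:\mathcal{C}_\xF\to \mathcal{C}_\xF$ sending $\psi$ to the unique solution of \eqref{proci} (produced by the existence theorem of Subsection~3.1) with $d=d[\psi,h]$. The pivotal observation is that \eqref{ci(t)} was tailor-made so that multiplying \eqref{proci} by $w'(r(y)-\rho_j(t))r(y)$, integrating in $y$, and integrating by parts in the spatial operator gives $\frac{d}{dt}\int_0^\infty \CT(\psi)\, w'(r-\rho_j)\,r\,dy = 0$ for every $j=1,\ldots,k$; together with the vanishing initial data at $t=s$, this enforces the orthogonality conditions \eqref{orthcond} for every iterate, so that Lemma~\ref{mainlem} becomes applicable. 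Applying it to $\CT(\psi_1)-\CT(\psi_2)$, which solves the homogeneous equation with forcing $-\sum_j(d_j[\psi_1]-d_j[\psi_2])w'(\cdot-\rho_j)$, yields
\bal
\|\CT(\psi_1)-\CT(\psi_2)\|_{\mathcal{C}_\xF} \leq C\,\xd_1(T_0)\,\|\psi_1-\psi_2\|_{\mathcal{C}_\xF}.
\eal
Choosing $T_0$ large enough $\CT$ becomes a strict contraction; its unique fixed point $\psi^s$ is the required solution of the coupled system, and the bound \eqref{estfixmef} follows from a final application of Lemma~\ref{mainlem} to $\psi^s$.

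The main obstacle is the quantitative estimate on $d[\psi,h]$ and the verification that $\xd_1(T_0)<1$: this requires balancing the polynomial factors $(\log|t|/|t|)^{\xs/\sqrt{2}}$ that appear when estimating integrals of $\psi$ against $w'(r-\rho_j)$ through the $\mathcal{C}_\xF$-norm, against the reciprocal factors $(\log|t|/|t|)^{-\xs/\sqrt{2}}$ coming from \eqref{mainbound} when re-expressing $d_j w'(r-\rho_j)$ in the same norm. The restriction $0<\xs<\sqrt{2}$ built into the weight $\xF$ is precisely what leaves enough room for this balance to close, together with the extra smallness provided by $\rho_j'(t)\sim 1/\sqrt{|t|}$ in the first term on the right-hand side of \eqref{ci(t)}.
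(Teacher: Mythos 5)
Your overall architecture — reduce to a fixed point for $\psi$ alone by eliminating $d$ via the nearly-diagonal system, then close via the a priori estimate — is the right one and is the one the paper uses. But the pivotal step of your argument is incorrect, and this flaw is precisely what the paper's proof is structured to avoid.

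You assert that because \eqref{ci(t)} is ``tailor-made,'' multiplying \eqref{proci} by $w'(r(y)-\rho_j(t))r(y)$ and integrating gives $\frac{d}{dt}\int_0^\infty \CT(\psi)\,w'(r-\rho_j)\,r\,dy = 0$ for \emph{every iterate} $\CT(\psi)$, hence orthogonality of every iterate, hence applicability of Lemma~\ref{mainlem} to $\CT(\psi_1)-\CT(\psi_2)$. This is false. The right-hand side of \eqref{ci(t)} is evaluated at the \emph{input} $\psi$, whereas the function satisfying the PDE in \eqref{proci} with that forcing is the \emph{output} $\CT(\psi)$. Multiplying the equation for $\CT(\psi)$ by $w'_j r$, integrating, and integrating by parts produces the terms $-\rho_j'\langle \CT(\psi),w''_j r\rangle + (N-1)\langle \CT(\psi),(w''_j y)_y\rangle + \langle f'(z)\CT(\psi),w'_j r\rangle + \langle h,w'_j r\rangle - \sum_i d_i[\psi]\langle w'_i,w'_j\rangle$. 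The system \eqref{ci(t)} cancels the last four pieces only against the same expressions with $\psi$ in place of $\CT(\psi)$; what remains is proportional to $\langle \CT(\psi)-\psi, \cdot\rangle$, which vanishes only at the fixed point. So iterates do \emph{not} satisfy \eqref{orthcond}, and neither does $\CT(\psi_1)-\CT(\psi_2)$, so Lemma~\ref{mainlem} cannot be invoked in the contraction step. Falling back to the basic existence estimate \eqref{normest} is no help either: its constant depends on $t-s$ and degenerates as $s\to-\infty$, so you cannot get a uniform contraction on $(s,-T_0]$ directly.

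The paper resolves this with a two-step argument that you should adopt. First, run the contraction on the short window $(s,s+1]$ only, using \eqref{normest} (with $t-s=1$ the constant is uniform) together with the smallness of $D(\psi)$ from Lemma~\ref{cilemma}; this produces a fixed point $\psi^s$ on $(s,s+1]$, which \emph{does} satisfy the orthogonality conditions, precisely because at a fixed point the input and output coincide and the cancellation you wanted actually happens. Second, extend $\psi^s$ forward by a continuation argument: as long as the solution exists it satisfies orthogonality, so the a priori bound \eqref{estfix} of Lemma~\ref{mainlem} (combined again with Lemma~\ref{cilemma} to absorb the $D(\psi^s)$ term for $T_0$ large) applies and keeps the $\mathcal{C}_\xF$-norm under control, preventing blow-up and yielding existence and the bound \eqref{estfixmef} on all of $(s,-T_0]$. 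Your preliminary observations about the near-diagonality of $M(t)$ and the $(\log|t|/|t|)$-factors are correct and are indeed the content of Lemma~\ref{cilemma}; the gap is solely in where and how the orthogonality-dependent a priori estimate is allowed to enter.
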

To prove the above Lemma we need the following result.

\begin{lemma}
Let $T>0$ be large enough, $h\in\mathcal{C}_\xF((s,-T)\times\mathbb{R})$ and $\psi\in \mathcal{C}_\xF((s,-T)\times[0,\infty)).$ Then there exists $d(t)=(d_1(t),...,d_k(t))$ such that the nearly diagonal system \eqref{ci(t)} holds. Furthermore, there exists a positive constant $c$ independent of $T,\;s,\;t,\;\psi,\;h$ such that
\ba\label{ciest}\BAL
|d_i(t)|&\leq c\left(\frac{\log|t|}{|t|}\right)^{\frac{\sqrt{2}+\xs}{2\sqrt{2}}}\left|\left|\psi\right|\right|_{\mathcal{C}_\xF((s,-T)\times(0,\infty))}\\
&\qquad+
c\left(\frac{\log|t|}{|t|}\right)^{\frac{\xs}{\sqrt{2}}}||h||_{\mathcal{C}_\xF((s,-T)\times(0,\infty))}\;\;\; \forall t\in [s,-T]\;\text{and}\; i=1,...,k
\EAL
\ea
and

\bal
\left|\frac{d_i(t)w'(r-\rho_i(t))}{\xF(t,r)}\right|&\leq c\left(\frac{\log|t|}{|t|}\right)^{\frac{1}{2}}\left|\left|\psi\right|\right|_{\mathcal{C}_\xF((s,-T)\times(0,\infty))}\\
&\quad+c||h||_{\mathcal{C}_\xF((s,-T)\times(0,\infty))}
\quad \forall\;t\in [s,-T]\;\text{and}\; i=1,...,k.
\eal
\label{cilemma}
\end{lemma}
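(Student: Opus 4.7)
The plan is to view the system \eqref{ci(t)} as a linear algebraic problem $A(t)\,d(t) = b(t)$, where $A_{ji}(t) = \int_0^\infty w'(r(y)-\xr_i(t))\,w'(r(y)-\xr_j(t))\,r(y)\,dy$ and $b_j(t)$ collects the four terms on the right-hand side of \eqref{ci(t)}. The proof splits into (a) inverting $A$, (b) bounding the components of $b$ in terms of $\|\psi\|_{\mathcal{C}_\xF}$ and $\|h\|_{\mathcal{C}_\xF}$, and (c) reading off the two advertised estimates.

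For (a), I would change variable $r=r(y)$, using that $r\,dy \sim (N-1)r^{N-1}dr$ for large $r$. After the translation $s=r-\xr_j$, the diagonal entries give $A_{jj}(t) = (N-1)\,C_0\,\xr_j(t)^{N-1}(1+o(1))$ with $C_0 = \int_{\mathbb{R}}(w')^2$, while for $i \neq j$ the exponential tails of $w'$ combined with the separation $|\xr_i-\xr_j| \geq \eta + O(1)$ and \eqref{lemodesimpl1} give $|A_{ij}(t)| = O\bigl(\xr_j^{N-1}\log|t|\cdot\log|t|/|t|\bigr)$. Thus $A(t) = D(t)(I+\mathcal{E}(t))$ with $D$ diagonal of size $\xr_j^{N-1}$ and $\|\mathcal{E}(t)\| \to 0$, so $A(t)^{-1}$ exists with $(A^{-1})_{ji} = O(\xr_j^{-(N-1)})$ uniformly for $|t|\geq T_0$ large.

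For (b), the key template is
\[
\int_0^\infty \xF(t,r(y))\,w'(r(y)-\xr_j(t))\,r(y)\,dy \leq C\,\xr_j^{N-1}\bigl(\log|t|/|t|\bigr)^{\sigma/\sqrt{2}},
\]
obtained by the change of variable $s=r-\xr_j$ and $\xF(t,\xr_j)\sim 2 e^{-\sigma\eta}$. This immediately bounds the $h$-term by $C\|h\|_{\mathcal{C}_\xF}\xr_j^{N-1}(\log|t|/|t|)^{\sigma/\sqrt{2}}$. For the $\xr_j'\psi w''_j$-term, the additional factor $|\xr_j'(t)|\leq C|t|^{-1/2}$ together with $|w''|\leq \sqrt{2}|w'|$ gives an $|t|^{-1/2}$ improvement. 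For the $(w''_j y)_y \psi$-term I expand $(w''_j y)_y = w'''_j\,r'(y) y + w''_j$: the second piece gains a $\xr_j^{-1}$ factor from the Jacobian $dy\sim(N-1)r^{N-2}dr$, and the first piece combines with the $f'(z)\psi$-term via the kernel identity $w'''(s)+f'(w(s))w'(s)=0$, converting $f'(z)\psi w'_j$ into $[f'(z)-f'(w(\cdot-\xr_j))]\psi w'_j$; since $z=(-1)^{j+1}w(\cdot-\xr_j)+O(e^{-\sqrt{2}\eta})$ near $\xr_j$, this difference is of size $e^{-\sqrt{2}\eta}=\log|t|/|t|$. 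After dividing by $\xr_j^{N-1}$ via $A^{-1}$, every $\psi$-contribution is controlled by $C(\log|t|/|t|)^{(\sqrt{2}+\sigma)/(2\sqrt{2})}\|\psi\|_{\mathcal{C}_\xF}$, yielding the first inequality of \eqref{ciest}. The second inequality follows by pairing the (actually sharper) intermediate estimates on $d_i$ with the pointwise bound $w'(r-\xr_i)/\xF \leq C(\log|t|/|t|)^{-\sigma/\sqrt{2}}$ from \eqref{mainbound}, so that $d_i w'(r-\xr_i)/\xF \leq c(\log|t|/|t|)^{1/2}\|\psi\|+c\|h\|$.

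The main obstacle is the recombination of the $(w''_j y)_y\psi$ and $f'(z)\psi$ contributions via the approximate-kernel identity, which is what transforms the naive exponent $\sigma/\sqrt{2}$ coming from each of these two terms separately into the stronger exponent $(\sqrt{2}+\sigma)/(2\sqrt{2})=\tfrac{1}{2}+\sigma/(2\sqrt{2})$ required. Verifying the estimate $f'(z)-f'(w(\cdot-\xr_j)) = O(e^{-\sqrt{2}\eta})$ uniformly in the window around $\xr_j$ requires handling the boundary cases $j=1$ and $j=k$ separately from interior $j$, and the constants must be tracked to depend only on the structural parameters $N,k,\sigma,c_0$ and not on the specific $\psi$ or $h$. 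Once this cancellation is in place, the nearly-diagonal inversion in step (a) and the consistency check for the second inequality are routine.
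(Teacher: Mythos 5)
Your proposal reproduces the paper's argument step by step: compute the nearly-diagonal Gram matrix with diagonal entries $\sim(N-1)\rho_j^{N-1}\int(w')^2$ and off-diagonal entries bounded through the separation of the $\rho_i$, bound the right-hand side of \eqref{ci(t)} using the $\xF$-weighted integrals $\int r^{\ell}\xF\,w'(\cdot-\rho_j)\lesssim\rho_j^{\ell}(\log|t|/|t|)^{\xs/\sqrt{2}}$, and, crucially, split $(w''_jy)_y = w'''_j r'(y)y + w''_j$ so that the Jacobian identity $(N-1)r'(y)y=\tfrac{y^2}{y^2+1}r(y)$ together with $w'''+f'(w)w'=0$ recombines the $w'''_j$ contribution with $f'(z)\psi w'_j$ into the small residual $\bigl(f'(z)-\tfrac{y^2}{y^2+1}f'(w(\cdot-\rho_j))\bigr)\psi w'_j r$, controlled as in \eqref{213}, before dividing by the diagonal $\rho_j^{N-1}$. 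This is precisely the paper's proof. One caveat you inherit from the paper: pairing the first estimate with \eqref{mainbound} yields the exponent $(\sqrt{2}-\xs)/(2\sqrt{2})$ on the $\|\psi\|$-term of the second inequality, not $1/2$ as written; this weaker rate still tends to $0$ as $|t|\to\infty$ and is sufficient for the fixed-point arguments that follow, so the discrepancy is harmless.
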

\begin{proof}
By change of variables with  $r=(y^{2}+1)^{\frac{1}{2(N-1)}}$, we have

\bal
\int_0^\infty |w'(r(y)-\rho_j(t))|^2r(y)dy&=(N-1)\int_1^\infty |w'(r-\rho_j(t))|^2\frac{r^{2(N-1)}}{\sqrt{r^{2(N-1)}-1}}dr\\
&=(N-1)\xr_j(t)^{N-1}\int_{-\infty}^\infty |w'(r)|^2 dr+O(\xr_j(t)^{N-2}).
\eal
For $i\neq j,$ we obtain

\bal
\int_0^\infty w'(r(y)-\rho_i(t))w'(r(y)-\rho_j(t))r(y)dy&= (N-1)\int_1^\infty w'(r-\rho_i(t))w'(r-\rho_j(t))\frac{r^{2(N-1)}}{\sqrt{r^{2(N-1)-}1}}dr\\
&\leq C (\rho_j(t))^{N-1}|\eta(t)|\left(\frac{|\log|t||}{t}\right)^{|j-i|}
\eal
Thus the system is nearly diagonal and we can solve it for $T$ big enough.

For any $\ell\in\BBN\cup\{0\},$ we can easily prove that
\ba
\int_{1}^\infty r^{\ell}\xF(t,r)dr\leq C\sum_{i=1}^{k}(\rho_i(t))^\ell\left(\frac{\log|t|}{|t|}\right)^\frac{\xs}{2\sqrt{2}}\label{****}
\ea
and
\ba
\int_{1}^\infty r^{\ell}\xF(t,r)w'(r-\rho_j(t))dx \leq C\sum_{i=1}^{k}(\rho_j(t))^\ell\left(\frac{\log|t|}{|t|}\right)^{\frac{\xs}{\sqrt{2}}}.\label{*****}
\ea

Now,

\bal
(N-1)\int_0^\infty\psi(t,y)\left(w''(r(y)-\xr_j(t))y\right)_y dy&=(N-1)\int_0^\infty \psi(t,y)w''(r(y)-\xr_j(t))dy\\
&+(N-1)\int_0^\infty \psi(t,y)w'''(r(y)-\xr_j(t))r'(y)ydy.
\eal

On one hand we have that

\bal
&\bigg|\int_0^\infty \psi(t,y)w''(r(y)-\xr_i(t))dy\bigg|=(N-1)\bigg|\int_1^\infty \psi(t,y(r))w''(r-\xr_i(t))\frac{r^{2N-3}}{\sqrt{r^{2(N-1)}-1}}dr\bigg|\\
&\quad\leq C\left|\left|\psi\right|\right|_{\mathcal{C}_\xF((s,-T)\times(0,\infty))}\int_{1}^\infty \xF(t,r)w'(r-\rho_j(t))\frac{r^{2N-3}}{\sqrt{r^{2(N-1)}-1}}dr\\
&\quad\leq C\left|\left|\psi\right|\right|_{\mathcal{C}_\xF((s,-T)\times(0,\infty))}
\sum_{j=1}^{k}(\rho_j(t))^{N-2}\left(\frac{\log|t|}{|t|}\right)^{\frac{\xs}{\sqrt{2}}}
\eal
On the other hand

\bal
&(N-1)\int_0^\infty \psi(t,y)w'''(r(y)-\xr_j(t))r'(y)ydy+\int_0^\infty f'(z(t,r(y)))\psi(t,y)w'(r(y)-\xr_j(t))r(y)dy\\
&\quad=\int_0^\infty \bigg(f'(z(t,r(y)))-\frac{y^2f'(w(r(y)-\xr_j(t)))}{y^2+1}\bigg)\psi(t,y)w'(r(y)-\xr_j(t))r(y)dy\\
&\quad\leq C\left|\left|\psi\right|\right|_{\mathcal{C}_\xF((s,-T)\times(0,\infty))}
\sum_{j=1}^{k}(\rho_j^0(t))^{-N+1}\left(\frac{\log|t|}{|t|}\right)^{\frac{\xs}{\sqrt{2}}}\\
&\qquad+\int_0^\infty \big(f'(z(t,r(y)))-f'(w(r(y)-\xr_j(t)))\big)\psi(t,y)w'(r(y)-\xr_j(t))r(y)dy.
\eal
In view of the proof of \eqref{213}, we obtain

\bal
&\bigg|\int_0^\infty \big(f'(z(t,r(y)))-f'(w(r(y)-\xr_j(t)))\big)\psi(t,y)w'(r(y)-\xr_j(t))r(y)dy\bigg|\\
&\leq C\left|\left|\psi\right|\right|_{\mathcal{C}_\xF((s,-T)\times(0,\infty))}\\
&\qquad\int_{1-\xr_j(t)}^\infty\big(f'(z(t,r+\xr_j(t)))-f'(w(r))\big)\xF(t,r+\xr_j(t))w'(r) \frac{(r+\xr_j(t))^{2(N-1)}}{\sqrt{(r+\xr_j(t))^{2(N-1)}-1}}dr\\
&\leq C\left|\left|\psi\right|\right|_{\mathcal{C}_\xF((s,-T)\times(0,\infty))}
\sum_{j=1}^{k}(\rho_j^0(t))^{N-1}\left(\frac{\log|t|}{|t|}\right)^{\frac{\sqrt{2}+\xs}{2\sqrt{2}}}.
\eal

By assumptions on $\rho$ we have

\begin{align}
\left|\rho_j'(t)\right|+\left|\frac{1}{r+\rho_j(t)}\right|\leq \frac{C}{\sqrt{|t|}},\label{xi'}
\end{align}
Combining all above, we can easily show the first inequality of the Lemma.

The second inequality is a consequence of \eqref{mainbound} and \eqref{ciest}.

The proof of Lemma is complete.
\end{proof}

\begin{proof}[\textbf{Proof of Lemma \ref{cilemma*}}.]

We will prove that there exists a unique solution of the problem (\ref{proci}) by using a fix point argument.

Let
\bal
X^s=\{\psi:\;||\psi||_{\mathcal{C}_\xF((s,s+1)\times(0,\infty)}<\infty\}.
\eal

We consider the operator $A^s: X^s\rightarrow X^s$ given by
\bal
A^s(\psi)=T^s(h-D(\psi)),
\eal
where $T^s(g)$ denotes the solution to (\ref{mainpro*}) and $D(\psi)=\sum_{j=1}^k d_j(t)w'(x-\rho_j(t)).$ By \eqref{normest}, we have
\be
||A^s(\psi)||_{\mathcal{C}_\xF((s,s+1)\times(0,\infty))}\leq c_1||h-D(\psi)||_{\mathcal{C}_\xF((s,s+1)\times(0,\infty))}\label{lem1}
\ee
for some uniform constant $c_1>0.$

We will show that the map $A^s$ defines a contraction mapping and we will apply the fixed point theorem to it.
To this end, let $c$ be the constant in Lemma \ref{cilemma}. We set
\bal
c_2=\max(c,1)c_1||h||_{\mathcal{C}_\xF((s,-T)\times(0,\infty))}
\eal
and
\bal
X^s_c=\{\psi:\;||\psi||_{C_\xF((s,s+1)\times(0,\infty))}<2c_2\}.
\eal

We claim that $A^s(X^s_c)\subset X^s_c,$ indeed by \eqref{lem1} we have
\bal
\nonumber
&||A^s(\psi)||_{\mathcal{C}_\xF((s,s+1)\times(0,\infty))}\leq c_1||h-D(\psi)||_{\mathcal{C}_\xF((s,s+1)\times(0,\infty))}\\
&\leq c_1\left(||h||_{\mathcal{C}_\xF((s,-T)\times(0,\infty))}+
c_1||D(\psi)||_{\mathcal{C}_\xF((s,s+1)\times(0,\infty))}\right)\\
&\leq
\frac{c_1c}{\sqrt{|s+1|}}\left(||\psi||_{\mathcal{C}_\xF((s,s+1)\times(0,\infty))}\right)+c_2\\
&\leq c_2+c_2,
\eal
where in the above inequalities we have used Lemma \ref{cilemma} and we have chosen $|s|$ big enough. Next we show that $A^s$ defines a contraction map. Indeed, since $D(\psi)$ is linear in $\psi$ we have
\bal
&||A^s(\psi_1)-A^s(\psi_2)||_{\mathcal{C}_\xF((s,s+1)\times(0,\infty))}\\
&\leq c_1||D(\psi_1)-D(\psi_2)||_{\mathcal{C}_\xF((s,s+1)\times(0,\infty))}=
c_1 ||D(\psi_1-\psi_2)||_{\mathcal{C}_\xF((s,s+1)\times(0,\infty))}\\
&\leq\frac{cc_1}{\sqrt{|s+1|}}||(\psi_1-\psi_2)||_{\mathcal{C}_\xF((s,s+1)\times(0,\infty))}\\
&\leq\frac{1}{2}||(\psi_1-\psi_2)||_{\mathcal{C}_\xF((s,s+1)\times(0,\infty))}.
\eal
Combining all above, we have by fixed point theorem that there exists a $\psi^s\in X^s$ so that $A^s(\psi^s)=\psi^s,$ meaning that the equation (\ref{proci}) has a solution $\psi^s,$ for $-T=s+1.$

We claim that $\psi^s(t,x)$ can be extended to a solution on $(s,-T_0]\times(0,\infty),$ still satisfies the orthogonality condition (\ref{orthcond}) and the a priori estimate. To this end, assume that our solution $\psi(t,\cdot)$ exists for $s\leq t\leq -T,$ where $T>T_0$ is the maximal time of the existence. Since $\psi^s$ satisfies the orthogonality condition (\ref{orthcond}), we have by \eqref{estfix}
\bal
||\psi^s||_{\mathcal{C}_\xF((s,-T)\times(0,\infty))}\leq C||h-D(\psi)||_{\mathcal{C}_\xF((s,-T)\times(0,\infty))}.
\eal
Thus if we choose $T_0$ big enough, we have by Lemma \ref{cilemma} that
\bal
||\psi^s||_{\mathcal{C}_\xF((s,-T)\times(0,\infty))}&\leq C||h||_{\mathcal{C}_\xF((s,-T)\times(0,\infty))}\\
&\leq C||h||_{\mathcal{C}_\xF((s,-T_0)\times(0,\infty))}
\eal
It follows that $\psi^s$ can be extended past time $-T,$ unless $T=T_0.$ Moreover, (\ref{estfixmef}) is satisfied as well and $\psi^s$ also satisfies the orthogonality condition.
\end{proof}

\begin{proposition}  \label{prop1} Let $T_0>2$ be the constant in Lemma \ref{cilemma*}. For each $h\in\mathcal{C}_\xF((-\infty ,-T_0)\times\mathbb{R}),$ there exists a solution $\psi\in \mathcal{C}_\xF((-\infty ,-T_0)\times(0,\infty))$ satisfying \eqref{orthcon} and

\ba&\BAL
\psi_t&=\frac{(N-1)^2}{(y^2+1)^{\frac{1}{2(N-1)}}}\frac{d}{dy}\left((y^2+1)^{\frac{2N-3}{2(N-1)}}\psi_y\right)+f'(z(t,r(y)))\psi &&\\
&\qquad\qquad+h(t,y)-\sum_{j=1}^k d_i(t)w'(r-\rho_j(t)),\quad &&\mathrm{in}\;\;\;(-\infty,-T_0]\times(0,\infty),
\EAL
\ea
where $d_i$ satisfies \eqref{ci(t)} with $s=-\infty.$ Furthermore, the operator, given by $\psi = A(h),$ is
a linear operator of $h$ and satisfies the estimate
\be
||A(h)||_{\mathcal{C}_\xF((-\infty,t)\times(0,\infty))}\leq C||h||_{\mathcal{C}_\xF((-\infty,t)\times(0,\infty))}\qquad\forall t\leq -T_0,
\label{estfix**}
\ee
where $C$ is a uniform constant.
\end{proposition}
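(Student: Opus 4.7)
The strategy is a standard diagonal/limiting argument based on Lemma \ref{cilemma*}, passing from solutions on finite time intervals $(s,-T_0]$ to a solution on the infinite interval $(-\infty,-T_0]$.

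First, for each $s<-T_0$ I would invoke Lemma \ref{cilemma*} to obtain a solution $\psi^s\in \mathcal{C}_\xF((s,-T_0)\times(0,\infty))$ of \eqref{proci} (with zero initial trace at $t=s$) together with associated multipliers $d_i^s(t)$ satisfying \eqref{ci(t)}. The key point is that the estimate \eqref{estfixmef} is uniform in $s$, so
\bal
\|\psi^s\|_{\mathcal{C}_\xF((s,t)\times(0,\infty))}\leq C\,\|h\|_{\mathcal{C}_\xF((-\infty,t)\times(0,\infty))}
\eal
for every $t\in(s,-T_0]$, with $C$ independent of $s$. Extend each $\psi^s$ by zero for times $\leq s$ so that all are defined on $(-\infty,-T_0]\times(0,\infty)$.

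Next, I would extract a limit along a sequence $s_n\to -\infty$. On any bounded parabolic box $K\subset(-\infty,-T_0]\times[0,\infty)$, the right-hand side of \eqref{proci}, i.e.\ $h-\sum d_j^{s_n}w'(r-\rho_j)$, is uniformly bounded by the above estimate combined with the pointwise control of $d_i^s$ from Lemma \ref{cilemma}. Standard interior parabolic Schauder/De Giorgi-type regularity (together with the boundary conditions $\psi^s(t,0)=\psi^s_y(t,0)=0$ to handle $y=0$) then yields local $C^{\alpha}$ bounds on $\psi^{s_n}$ and its first space derivative uniformly in $n$. By Arzelà--Ascoli and a diagonal extraction over an exhaustion $K_m\uparrow(-\infty,-T_0]\times[0,\infty)$, a subsequence converges locally uniformly to some $\psi$, and by the uniform estimate from Lemma \ref{cilemma} the $d_i^{s_n}(t)$ converge locally uniformly to some functions $d_i(t)$. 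Passing to the limit in \eqref{proci} (which is linear in $\psi$, so limits pass through), in the orthogonality relations \eqref{orthcond} (justified by dominated convergence using the exponential decay of $w'$ and the $\mathcal{C}_\xF$ bound for $\psi^{s_n}$), and in the algebraic system \eqref{ci(t)} for $d_i$, yields the desired solution on $(-\infty,-T_0]$. The uniform estimate \eqref{estfix**} is inherited from \eqref{estfixmef} by lower semicontinuity of the sup norm under locally uniform convergence.

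Finally, linearity of the map $h\mapsto\psi=A(h)$ is most cleanly achieved by fixing, once and for all, a single extraction procedure (e.g.\ by uniqueness: the linear problem on $(-\infty,-T_0]$ admits at most one solution satisfying \eqref{orthcon} in $\mathcal{C}_\xF$, which follows from Lemma \ref{mainlem} applied to the difference of two solutions with $g\equiv 0$, using that the a priori estimate is independent of $s$ and letting $s\to -\infty$). Uniqueness gives that the whole family $\psi^s$ (not just a subsequence) converges, and it forces $A(h_1+\lambda h_2)=A(h_1)+\lambda A(h_2)$ since both sides satisfy the same limit problem with the same right-hand side. The main technical obstacle is the justification of the limit in the orthogonality condition and in the $d_i$-system on the unbounded spatial interval; this is where one needs the exponential decay encoded in $\xF$ together with the integral estimates \eqref{****}--\eqref{*****} already established in the proof of Lemma \ref{cilemma} to secure uniform integrability.
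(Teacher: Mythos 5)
Your proof follows essentially the same compactness/limiting argument as the paper: take a sequence $s_j\to-\infty$, use the $s$-uniform estimate \eqref{estfixmef} from Lemma \ref{cilemma*}, extract a locally uniformly convergent subsequence by parabolic regularity, and pass to the limit. The paper's proof is a compressed version of this; your write-up merely fills in more of the details (convergence of the $d_i$, passage to the limit in the orthogonality integrals, and the linearity-via-uniqueness observation), all of which are consistent with what the paper leaves implicit.
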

\begin{proof}
 Take a sequence $s_j\rightarrow-\infty$ and  $\psi_j=\psi^{s_j}$ where $\psi^{s_j}$ is the function   (\ref{proci}) with $s=s_j.$ Then by (\ref{estfix}), we can find a subsequence $\{\psi_j\}$ and $\psi$ such that $\psi_j\rightarrow\psi$ locally uniformly in $(-\infty,-T_0)\times(0,\infty).$

Using \eqref{estfixmef} and standard parabolic theory, we may deduce that $\psi$ is a solution of (\ref{proci}) and satisfies (\ref{estfix**}).
The proof is complete.
\end{proof}

\section{The nonlinear problem}
Going back to the nonlinear problem, function $\psi$  is a solution of (\ref{mainpro}) if and only if  $\psi\in C_\xF((-\infty,-T_0)\times(0,\infty))$ solves the fixed point problem
\be
  \psi= H(\psi ) \label{2.14}
\ee
where
\bal
H(\psi ) := A(\overline{E}(\psi)),
\eal
$A$ is the operator in Proposition \ref{prop1} and
\bal
\overline{E}(\psi)=E+N(\psi)-\sum_{i=1}^k d_i(t)w'(x-\rho_i(t)).
\eal
Let $T_0>2,$ we define
\bal
\xL=\left\{h\in C^1(-\infty,-T_0]:\;\sup_{t\leq -T_0}|h(t)|+\sup_{t\leq -T_0}\left(\frac{|t|}{\log|t|}|h'(t)|\right)<1\right\}
\eal
and
\bal
||h||_\xL=\sup_{t\leq -T_0}(|h(t)|)+\sup_{t\leq -T_0}\left(\frac{|t|}{\log|t|}|h'(t)|\right).
\eal
The main goal in this section is to prove the following Proposition.
\begin{proposition}
Let $\xs<\sqrt{2}$ and $\xn=\frac{\sqrt{2}-\xs}{2\sqrt{2}}$. There exists number $T_0> 0,$ depending only on $\xs$ such that for any given functions $h$ in $\xL,$ there is a solution  $ \psi= 	 \Psi(h)$ of (\ref{2.14}), with respect $\rho=\rho^0+h.$ The solution $\psi$
satisfies the orthogonality condition \eqref{orthcon}.
Moreover, the following estimate holds
\be
||\Psi(h_1)-\Psi(h_2)||_{\mathcal{C}_\xF((-\infty,-T_0)\times(0,\infty))}\leq C\left(\frac{\log T_0}{T_0}\right)^\xn||h_1-h_2||_\xL,\label{diafora}
\ee
where $C$ is a universal constant.\label{mainproposition}
\end{proposition}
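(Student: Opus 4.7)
The plan is a standard contraction-mapping argument for $H = A \circ \overline{E}$ in a small ball of $\mathcal{C}_\xF((-\infty,-T_0)\times(0,\infty))$. All three ingredients needed are in place: Proposition \ref{prop1} provides a bounded linear solver $A$ (with the projections onto $d_i(t) w'(r-\rho_i(t))$ already built into its construction); Lemma \ref{remark} gives the smallness of the error, namely $\|E\|_{\mathcal{C}_\xF} \leq C(\log T_0/T_0)^\xn$; and the remainder $N(\psi) = f(z+\psi)-f(z)-f'(z)\psi$ is quadratic in $\psi$ since $f(u)=u(1-u^2)$ is cubic, so $|N(\psi)|\leq C(|\psi|^2+|\psi|^3)$ pointwise.

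I would work on the closed ball $B_\gd := \{\psi\in \mathcal{C}_\xF:\|\psi\|_{\mathcal{C}_\xF}\leq \gd\}$, with $\gd = 2C(\log T_0/T_0)^\xn$ and $T_0$ large. Since $\xF$ is uniformly bounded on the relevant region, for $\psi\in B_\gd$ one has $|\psi|\leq C\gd$, hence $|N(\psi)|\leq C\gd\,\xF\,|\psi|$ and $\|N(\psi)\|_{\mathcal{C}_\xF}\leq C\gd\|\psi\|_{\mathcal{C}_\xF}$. Combining with \eqref{estfix**} gives $\|H(\psi)\|_{\mathcal{C}_\xF} \leq \gd/2 + C\gd^2 \leq \gd$ once $T_0$ is large enough that $C\gd\leq 1/2$. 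The contraction property comes from the pointwise bound $|N(\psi_1)-N(\psi_2)| \leq C(|\psi_1|+|\psi_2|)\,|\psi_1-\psi_2|$, which yields $\|H(\psi_1)-H(\psi_2)\|_{\mathcal{C}_\xF}\leq \tfrac{1}{2}\|\psi_1-\psi_2\|_{\mathcal{C}_\xF}$. Banach's fixed point theorem then produces a unique $\psi = \Psi(h) \in B_\gd$, which automatically satisfies \eqref{orthcon} since $A$ has orthogonal range by Proposition \ref{prop1}.

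The main obstacle is the Lipschitz estimate \eqref{diafora}, because $E$, $N$ and the solver $A$ all depend on the parameter $h$ through $\rho = \rho^0 + h$ (both the background profile $z(t,r)$ and the basis functions $w'(r-\rho_j)$ used in the projection step that defines $A$ shift with $\rho$). The key computation is to differentiate the explicit formula \eqref{error} for $E$ in $\rho$, using the exponential decay of $w', w'', w'''$ and the a priori bound $\|h\|_\xL<1$, in order to obtain
\[
\|E_{h_1} - E_{h_2}\|_{\mathcal{C}_\xF} \leq C\left(\frac{\log T_0}{T_0}\right)^\xn \|h_1-h_2\|_\xL ;
\]
the small prefactor is produced by exactly the same mechanism (Toda-type cancellations and the decay of $w'$ between consecutive fronts) that gives the smallness of $E$ itself in Lemma \ref{remark}. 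Analogous Lipschitz bounds in $h$ for the multipliers $d_i$ follow from Lemma \ref{cilemma}, and a parallel argument handles the difference of operators $A_{h_1}-A_{h_2}$ acting on a fixed right-hand side (the variation of $f'(z)$ and of the projection basis is of the same order). Writing
\[
\Psi(h_1)-\Psi(h_2) = A_{h_1}\bigl[\overline{E}_{h_1}(\Psi(h_1))-\overline{E}_{h_1}(\Psi(h_2))\bigr] + A_{h_1}\bigl[\overline{E}_{h_1}(\Psi(h_2))-\overline{E}_{h_2}(\Psi(h_2))\bigr] + (A_{h_1}-A_{h_2})\bigl[\overline{E}_{h_2}(\Psi(h_2))\bigr],
\]
the first term is bounded by $\tfrac{1}{2}\|\Psi(h_1)-\Psi(h_2)\|_{\mathcal{C}_\xF}$ via the contraction estimate and is absorbed on the left, while the remaining two are controlled by $C(\log T_0/T_0)^\xn\|h_1-h_2\|_\xL$, giving \eqref{diafora}.
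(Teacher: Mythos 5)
Your part (a) (existence of the fixed point $\psi=\Psi(h)$) coincides with the paper's argument: a contraction mapping in the ball $X_{T_0}$, using Proposition~\ref{prop1} for the bounded solver, Lemma~\ref{remark} for the smallness of $E$, and the quadratic structure of $N$. No objection there.

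Part (b), the Lipschitz estimate \eqref{diafora}, is where the routes diverge, and your proposed route has a genuine gap. Your three-term decomposition pushes all the $h$-dependence of the solver into the term $(A_{h_1}-A_{h_2})\bigl[\overline{E}_{h_2}(\Psi(h_2))\bigr]$, and you invoke ``a parallel argument'' to claim a Lipschitz bound on $A$ in $h$. This is exactly the nontrivial point. Proposition~\ref{prop1} provides only the $h$-fixed estimate \eqref{estfix**}; it says nothing about how $A$ varies with $\rho$. More seriously, if one fixes $g$ and sets $\phi:=A_{h_1}g-A_{h_2}g$, then $\phi$ satisfies \emph{neither} orthogonality condition: $A_{h_1}g$ is orthogonal to the basis $\{w'(r-\rho_j^1)\}$ while $A_{h_2}g$ is orthogonal to the shifted basis $\{w'(r-\rho_j^2)\}$. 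The a priori estimate of Lemma~\ref{mainlem} is only available for functions satisfying \eqref{cond}, so it cannot be applied to $\phi$ directly. Closing this step would require an orthogonality-restoration device at the linear-operator level, which is essentially the same amount of work as the paper's argument but done in a less convenient place.

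The paper avoids the operator-difference estimate entirely. It works with the nonlinear fixed points $\psi^1=\Psi(h_1)$, $\psi^2=\Psi(h_2)$ and corrects $\psi^2$ by a tangential perturbation,
\[
\overline{\psi}^2=\psi^2-\sum_{i=1}^k\lambda_i(t)\,w'\!\left(r(y)-\rho_i^1(t)\right),
\]
with $\lambda_i$ chosen (via a nearly diagonal Gram system) so that $Y:=\psi^1-\overline{\psi}^2$ satisfies the orthogonality conditions \eqref{orthcon} with respect to $\rho^1$. The a priori estimate then applies directly to $Y$. The coefficients $\lambda_i$ and $\lambda_i'$ are small because $\int\psi^2\,w'(r-\rho_j^1)\,r\,dy=\int\psi^2\bigl(w'(r-\rho_j^1)-w'(r-\rho_j^2)\bigr)r\,dy$, which is $O(\|h_1-h_2\|_{\xL})$ times a small factor, and the smallness of $\|Y\|$ plus the smallness of the $\lambda_i$ yields \eqref{diafora} after absorbing the self-similar term on the left. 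Your Lipschitz bounds on $E$ and $N$ in $h$ are correct and match Lemmas~\ref{dia1}--\ref{dia2}; what is missing is the counterpart of this $\overline{\psi}^2$ construction, without which the $(A_{h_1}-A_{h_2})$ term cannot be controlled by the available estimates.
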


To prove Proposition \ref{mainproposition} we need to prove some lemmas first. Set
\bal
X_{T_0}=\{\psi:\;||\psi||_{\mathcal{C}_\xF((-\infty,-T_0)\times(0,\infty))}<2C_0\left(\frac{\log T_0}{T_0}\right)^\xn\},
\eal
for some fixed constant $C_0.$

We denote by $N(\psi,h)$ the function $N(\psi)$ in \eqref{mainlem} with respect $\psi$ and $\rho=\rho^0+h.$
 Also we denote by $z_i$ the respective function in \eqref{z} with respect to $\rho=\rho_i=\rho^0+h_i,$ $i=1,2.$

\begin{lemma}
Let $h_1,\;h_2\in \xL$ and $\psi_1,\;\psi_2\in X_{T_0}.$
Then there exists a constant $C=C(C_0)$ such that
\begin{align*}
||N(\psi_1,h_1)&-N(\psi_2,h_2)||_{\mathcal{C}_\xF((-\infty,-T_0)\times(0,\infty))}\\ &
\leq C\left(\frac{\log T_0}{T_0}\right)^\xn\left(||\psi_1-\psi_2||_{\mathcal{C}_\xF((-\infty,-T_0)\times(0,\infty))}+\left(\frac{\log T_0}{T_0}\right)^\xn||h_1-h_2||_{\xL}\right).
\end{align*}\label{dia1}
\end{lemma}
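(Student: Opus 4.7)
The plan is to exploit the fact that $f(v) = v - v^3$ is a cubic polynomial, which gives the explicit formula
\[
N(\psi, h) = f(z_h + \psi) - f(z_h) - f'(z_h)\psi = -3\, z_h\, \psi^2 - \psi^3,
\]
with $z_h(t,r) := \sum_{j=1}^k (-1)^{j+1} w(r - \xr^0_j(t) - h_j(t)) - 1$. Writing $z_i := z_{h_i}$ for $i = 1, 2$, I would algebraically rearrange the difference into three pieces that cleanly separate the dependence on $\psi_1 - \psi_2$ and on $h_1 - h_2$:
\[
N(\psi_1, h_1) - N(\psi_2, h_2) = -3 z_1 (\psi_1 - \psi_2)(\psi_1 + \psi_2) - 3(z_1 - z_2)\psi_2^2 - (\psi_1 - \psi_2)(\psi_1^2 + \psi_1\psi_2 + \psi_2^2).
\]

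I would then estimate the three pieces separately in the $\mathcal{C}_\xF$ norm. A preliminary observation is that $\xF \leq 2$ uniformly on $(-\infty, -T_0)\times(0,\infty)$, because on each of the regions in definition \eqref{bound} the exponents are non-positive (e.g., on the $j$-th interval one has $-r + \xr^0_{j-1} \leq -(\xr^0_j - \xr^0_{j-1})/2 < 0$ and $r - \xr^0_{j+1} \leq -(\xr^0_{j+1} - \xr^0_j)/2 < 0$). Combined with the membership $\psi_i \in X_{T_0}$, which gives the pointwise bound $|\psi_i(t,y)| \leq 2C_0 (\log T_0/T_0)^\xn\, \xF(t, r(y))$, this yields $|\psi_1 + \psi_2| \leq 8 C_0 (\log T_0/T_0)^\xn$ and $|\psi_1^2 + \psi_1\psi_2 + \psi_2^2| \leq 12 C_0^2 (\log T_0/T_0)^{2\xn}\, \xF$. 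Using also the uniform bound $|z_1| \leq k + 1$, the first piece divided by $\xF$ is bounded by $C(C_0) (\log T_0/T_0)^\xn \|\psi_1 - \psi_2\|_{\mathcal{C}_\xF}$, and the third piece produces the smaller term $C(C_0) (\log T_0/T_0)^{2\xn} \|\psi_1 - \psi_2\|_{\mathcal{C}_\xF}$.

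For the second piece, I would apply the mean value theorem to each summand of $z_h$:
\[
|z_1(t,r) - z_2(t,r)| \leq \sum_{j=1}^k |w'(\xi_j)|\, |h_{1,j}(t) - h_{2,j}(t)| \leq k \|w'\|_\infty \|h_1 - h_2\|_\xL,
\]
and combine this with $|\psi_2|^2/\xF \leq 4C_0^2 (\log T_0/T_0)^{2\xn} \xF \leq 8 C_0^2 (\log T_0/T_0)^{2\xn}$ to obtain a contribution bounded by $C(C_0) (\log T_0/T_0)^{2\xn} \|h_1 - h_2\|_\xL$. Adding the three contributions produces the claimed estimate.

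The main obstacle is essentially book-keeping: one must keep careful track of how many factors of $(\log T_0/T_0)^\xn$ are produced by each factor of $\psi$ from $X_{T_0}$, and ensure that exactly one factor of $\xF$ remains in the numerator to cancel the weight in the denominator of the $\mathcal{C}_\xF$ norm. The algebra itself is elementary because $f$ is cubic, so no Taylor remainder argument is needed; the structure of $w$ enters only through the elementary mean value estimate on $z_1 - z_2$, and the exponential decay of $w'$ is not required at this step.
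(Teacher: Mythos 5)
Your proposal is correct and follows essentially the same approach as the paper: both use the explicit cubic formula $N(\psi,h)=-3z_h\psi^2-\psi^3$, telescope the difference into a part driven by $\psi_1-\psi_2$ (controlled via the $X_{T_0}$ smallness of each $\psi_i$) and a part driven by $z_1-z_2=O(\|h_1-h_2\|_{\xL})$ (controlled via the mean value theorem applied to $w$), and use the boundedness of $\xF$ to close the estimate in the $\mathcal{C}_\xF$ norm. The paper groups the telescoping as $N(\psi_1,h_1)-N(\psi_2,h_1)$ plus $N(\psi_2,h_1)-N(\psi_2,h_2)$ rather than writing your three-term decomposition explicitly, but the intermediate pointwise bounds it records are exactly the ones your pieces produce.
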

\begin{proof}
First we will prove that there exists constant $C>0$ depending only on $C_0$ such that
\be
||N(\psi_1,h_1)-N(\psi_2,h_1)||_{\mathcal{C}_\xF((-\infty,-T_0)\times(0,\infty))}\leq C\left(\frac{\log T_0}{T_0}\right)^\xn||\psi_1-\psi_2||_{\mathcal{C}_\xF((-\infty,-T_0)\times(0,\infty))}.\label{n1}
\ee
By straightforward calculation we can easily show that
\ba\label{1}
|N(\psi_1,h_1)-N(\psi_2,h_1)|\leq C\left(\frac{\log T_0}{T_0}\right)^\xn|\psi_1-\psi_2|(\xF+\xF^2),
\ea
where the constant $C$ depend on $C_0.$ This implies \eqref{n1}.

Next we will prove that
\be
||N(\psi_2,h_1)-N(\psi_2,h_2)||_{\mathcal{C}_\xF((-\infty,-T_0)\times(0,\infty))}\leq C\left(\frac{\log T_0}{T_0}\right)^\xn||h_1-h_2||_{\xL}.\label{n2}
\ee
where the constant $C$ depends on $C_0.$

By straightforward calculations we have
\ba\BAL
|N(\psi_2,h_1)-N(\psi_2,h_2)|&=|-(z_1+\psi_2)^3+z_1^3+3z_1^2\psi_2+(z_2+\psi_2)^3-z_2^3-3z_2^2\psi_2|\\
&\leq C\left(\frac{\log T_0}{T_0}\right)^{2\xn}|h_1-h_2|\xF^2,\label{n2*}
\EAL
\ea
which implies \eqref{n2}. By \eqref{n1} and \eqref{n2}, the desired result follows.
\end{proof}
We denote by $E(h)$ the function $E$ in \eqref{mainlem} with respect to  $\rho=\rho^0+h.$
\begin{lemma}
Let $h_1,\;h_2\in \xL.$
Then there exists constant $C=C(C_0)$ such that
\be
||E(h_1)-E(h_2)||_{\mathcal{C}_\xF((-\infty,-T_0)\times(0,\infty))}\leq C\left(\frac{\log T_0}{T_0}\right)^\xn||h_1-h_2||_{\xL}
\ee\label{dia2}
\end{lemma}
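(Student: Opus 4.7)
The plan is to decompose $E(h_1)-E(h_2)$ according to the three structural pieces visible in the definition \eqref{error}, and estimate each piece by a Lipschitz-in-$h$ argument, re-using for each one exactly the type of bound which already produced the estimate of Lemma \ref{remark}. Write $\rho^{(m)}_i(t)=\rho^0_i(t)+h_{m,i}(t)$ and $z^{(m)}=\sum_{i=1}^k(-1)^{i+1}w(r-\rho^{(m)}_i(t))-1$ for $m=1,2$, and correspondingly split $E(h_m)=E^{(m)}_A+E^{(m)}_B+E^{(m)}_C$, where $E_A$ collects the terms $(-1)^{i+1}w'(r-\rho_i)\big(\rho_i'+\frac{N-1}{r}\big)$, $E_B$ collects the curvature terms $-(-1)^{i+1}w''(r-\rho_i)/(1+y^2)$, and $E_C=f(z)-\sum(-1)^{i+1}f(w(r-\rho_i))$.

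For the $A$-term, I would expand
\bal
E^{(1)}_A-E^{(2)}_A=\sum_{i}(-1)^{i+1}\Big[\big(\rho_i^{(1)'}-\rho_i^{(2)'}\big)w'(r-\rho^{(1)}_i)+\big(\rho_i^{(2)'}+\tfrac{N-1}{r}\big)\big(w'(r-\rho^{(1)}_i)-w'(r-\rho^{(2)}_i)\big)\Big].
\eal
The first summand is bounded by $|h_{1,i}'-h_{2,i}'|\,w'(r-\rho^{(1)}_i)\leq (\log|t|/|t|)\|h_1-h_2\|_\Lambda\,w'(r-\rho^{(1)}_i)$ and by \eqref{mainbound} this is $\leq C(\log|t|/|t|)^{\nu}\Phi\|h_1-h_2\|_\Lambda$. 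The second summand uses $|w'(r-\rho^{(1)}_i)-w'(r-\rho^{(2)}_i)|\leq C|h_{1,i}-h_{2,i}|\max|w''|$ on the relevant interval, combined with the bound $|\rho_i^{(2)'}+\tfrac{N-1}{r}|\leq C\log|t|/|t|$ from the derivation of Lemma \ref{remark}; once again one compares to $\Phi$ via \eqref{mainbound}.

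The $B$-term is treated analogously with $|w''(r-\rho^{(1)}_i)-w''(r-\rho^{(2)}_i)|\leq C|h_{1,i}-h_{2,i}|\|w'''\|_\infty$, absorbing the exponential into $\Phi$ thanks to \eqref{mainbound}.

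The main obstacle is the nonlinear $C$-term, since $z^{(m)}$ couples all $k$ layers. I would write
\bal
E^{(1)}_C-E^{(2)}_C=\big[f(z^{(1)})-f(z^{(2)})\big]-\sum_{i}(-1)^{i+1}\big[f(w(r-\rho^{(1)}_i))-f(w(r-\rho^{(2)}_i))\big],
\eal
and on each strip $\tfrac12(\rho^0_{j-1}+\rho^0_j)\leq r\leq\tfrac12(\rho^0_j+\rho^0_{j+1})$ decompose $z^{(m)}=g^{(m)}+(-1)^{j+1}w(r-\rho^{(m)}_j)$ exactly as was done in \eqref{213}. Using that inequality twice (once for $m=1$, once for $m=2$) and combining with the Lipschitz bound $|z^{(1)}-z^{(2)}|+|g^{(1)}-g^{(2)}|\leq C|h_1-h_2|\sum_i|w'(r-\rho^{(m)}_i)|$, one gets on the $j$-th strip a majorant of the form $C|h_1-h_2|\big(\sum_{i<j}e^{\sqrt2(-r+\rho^0_{j-1})}+\sum_{i>j}e^{\sqrt2(r-\rho^0_{j+1})}\big)\leq C|h_1-h_2|\Phi(t,r)$. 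This already produces $\|E^{(1)}_C-E^{(2)}_C\|_{\mathcal C_\Phi}\leq C\|h_1-h_2\|_\Lambda$; the extra $(\log T_0/T_0)^{\nu}$ factor demanded by the statement comes for free from the fact that the exponential gaps $e^{-\sqrt2(\rho^0_{j+1}-\rho^0_j)}\asymp \log|t|/|t|$, exactly as in the proof of Lemma \ref{remark}. Assembling the three estimates gives the claimed inequality.
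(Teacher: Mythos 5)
Your decomposition of $E(h_1)-E(h_2)$ into a transport part $E_A$, a curvature part $E_B$, and a nonlinear part $E_C$, with the $A$- and $B$-differences controlled by freezing one slot and applying the mean value theorem to $w'$ (resp.~$w''$) in the other, then invoking the bounds $|\rho_i'+(N-1)/r|\le C\log|t|/|t|$ and \eqref{mainbound}, is precisely the computation the paper compresses into the one line ``in view of the proof of Lemma \ref{remark}\ldots''; those two parts of your sketch are sound (keeping in mind, as you do, that ``$\max|w''|$ on the relevant interval'' must be the \emph{local} supremum so that the exponential decay survives, and that outside the interior zone $\rho_1^0-\tfrac{\sqrt2+\sigma}{\sqrt2-\sigma}\eta\le r\le\rho_k^0+\tfrac{\sqrt2+\sigma}{\sqrt2-\sigma}\eta$ the smallness comes from $w'/\Phi\le C\log|t|/|t|$ rather than from $|\rho_i'+(N-1)/r|$).

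The $E_C$ step, however, does not close as written. Applying \eqref{213} once with $m=1$ and once with $m=2$ gives two one-sided pointwise bounds on $E_C^{(1)}$ and $E_C^{(2)}$ separately; their triangle-inequality combination controls $|E_C^{(1)}|+|E_C^{(2)}|$ but carries no factor of $|h_1-h_2|$, and you never explain how the quoted Lipschitz bound on $|z^{(1)}-z^{(2)}|$ enters the estimate. What you actually need is a Lipschitz version of the computation underlying \eqref{213}. Since $f(v)=v-v^3$ is a polynomial, $E_C$ is a smooth function of the parameter vector $\rho$, and one can write
\[
E_C(\rho^{(1)})-E_C(\rho^{(2)})=\int_0^1\nabla_\rho E_C\big(\rho^{(2)}+s(\rho^{(1)}-\rho^{(2)})\big)\cdot(h_1-h_2)\,\dd s,
\]
with $\partial_{\rho_\ell}E_C=-(-1)^{\ell+1}\,w'(r-\rho_\ell)\big(f'(z)-f'(w(r-\rho_\ell))\big)$. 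On the $j$-th strip, the factor $f'(z)-f'(w(r-\rho_\ell))$ is controlled by the same ``one dominant layer plus exponentially small tails'' reasoning that yields \eqref{213}, and $w'(r-\rho_\ell)$ is either $O(1)$ (if $\ell=j$) or itself exponentially small; comparing to $\Phi$ as in Lemma \ref{remark} gives $|\partial_{\rho_\ell}E_C|\le C(\log|t|/|t|)^{\nu}\Phi$, which is exactly the factor needed. This repairs the $E_C$ argument without changing the overall shape of your proof, which otherwise matches the paper's intended route.
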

\begin{proof} In view of the proof of Lemma \ref{remark}, we can easily show that there exists a constant $C$ such that

\ba\label{2}
|E(h_1)-E(h_2)|\leq C\left(\frac{\log |t|}{|t|}\right)^\xn \left(|h_1-h_2|+ \frac{|t|}{\log|t|}|h_1'-h_2'|\right)\xF.
\ea

This implies the desired result.
\end{proof}

\begin{lemma}
Let $h_1,\;h_2\in \xL,$ $\psi_1,\;\psi_2,\;\psi\in X.$  Also let $D(\psi,h,t)=(d_1(t),...,d_k(t))$ satisfy \eqref{ci(t)} with respect to $\psi$ and $\rho=\rho^0+h.$ Then
\begin{align}\nonumber
|D(\psi_1,h_1,t)-D(\psi_2,h_2,t)|&\leq C\left(\frac{|\log|t|}{|t|}\right)^{\frac{1}{2}+\frac{\xs}{\sqrt{2}}}||\psi_1-\psi_2||_{\mathcal{C}_\xF((-\infty,t)\times(0,\infty))}\\ &+C\left(\frac{\log|t|}{|t|}\right)^{\xn+\frac{\xs}{\sqrt{2}}}||h_1-h_2||_{\xL(-\infty,t)}
\end{align}
for some positive constant $C$ which depends only on $C_0.$
\label{dia3}
\end{lemma}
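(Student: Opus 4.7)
The strategy is to view the defining system \eqref{ci(t)} as a nearly diagonal matrix equation for $D$ and then split the difference $D(\psi_1,h_1,t)-D(\psi_2,h_2,t)$ into a pure $\psi$-difference and a pure $h$-difference. Writing
\begin{equation*}
M(h)_{ij}(t) := \int_0^\infty w'(r(y)-\rho_i(t))\,w'(r(y)-\rho_j(t))\,r(y)\,dy, \qquad \rho=\rho^0+h,
\end{equation*}
and letting $\mathbf{F}_j(\psi,h)$ denote the $j$-th right-hand side of \eqref{ci(t)}, the system reads $M(h)D(\psi,h)=\mathbf{F}(\psi,h)$. The opening computation in the proof of Lemma \ref{cilemma} shows $M(h)_{jj}\sim \rho_j(t)^{N-1}$ with off-diagonal entries of order $\rho_j^{N-1}|\eta(t)|$, so $M(h)^{-1}$ exists and $|M(h)^{-1}_{jj}|\leq C\rho_j^{-(N-1)}$ uniformly in $h\in\xL$. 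Subtracting the two defining identities gives
\begin{equation*}
M(h_1)\bigl[D(\psi_1,h_1)-D(\psi_2,h_2)\bigr] = \bigl[\mathbf{F}(\psi_1,h_1)-\mathbf{F}(\psi_2,h_2)\bigr]-\bigl[M(h_1)-M(h_2)\bigr]D(\psi_2,h_2),
\end{equation*}
so it suffices to bound the right-hand side and invert $M(h_1)$.

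I would then split $\mathbf{F}(\psi_1,h_1)-\mathbf{F}(\psi_2,h_2) = [\mathbf{F}(\psi_1,h_1)-\mathbf{F}(\psi_2,h_1)] + [\mathbf{F}(\psi_2,h_1)-\mathbf{F}(\psi_2,h_2)]$. At fixed $h_1$ the map $\psi\mapsto \mathbf{F}(\psi,h_1)$ is linear, so the first bracket equals $\mathbf{F}(\psi_1-\psi_2,h_1)$ (the $h$-only source term cancels in the difference). Repeating the four integral estimates in the proof of Lemma \ref{cilemma} verbatim with $\psi$ replaced by $\psi_1-\psi_2$ and dividing by $M(h_1)^{-1}$ then produces the first term of the claimed bound.

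For the second bracket I would use the convex interpolation $h_\tau := \tau h_1+(1-\tau)h_2$, $\tau\in[0,1]$, and apply the mean value theorem termwise. Differentiation of each $w^{(\ell)}(r-\rho_i(t))$ in $\tau$ produces a factor bounded by $|h_{1,i}-h_{2,i}|\leq \|h_1-h_2\|_\xL$; differentiation of $\rho_j'(t)$ produces a factor $\tfrac{|t|}{\log|t|}|h_{1,j}'-h_{2,j}'|\leq \|h_1-h_2\|_\xL$; and differentiation of $f'(z_\tau)$ yields $f''(z_\tau)\sum_{i=1}^k(\pm)[w(r-\rho_{i,1})-w(r-\rho_{i,2})]$. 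Each resulting integral is of exactly the type treated in the proof of Lemma \ref{cilemma}, so the bounds \eqref{****}--\eqref{*****} apply. Combining this with the smallness $\|\psi_2\|_{\mathcal{C}_\xF}\leq 2C_0(\log T_0/T_0)^\xn$ coming from $\psi_2\in X_{T_0}$, and using the estimate on $D(\psi_2,h_2)$ supplied by Lemma \ref{cilemma} to handle the correction $[M(h_1)-M(h_2)]D(\psi_2,h_2)$, delivers the announced $\|h_1-h_2\|_\xL$ contribution with exponent $\xn+\xs/\sqrt{2}$.

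The main technical obstacle will be controlling the $f'(z)\psi$ contribution under the $h$-perturbation: in Lemma \ref{cilemma} the sharper exponent relied on the cancellation $|f'(z)-f'(w(r-\rho_j))|\leq C\xF$, and preserving this cancellation amounts to verifying that $f''(z_\tau)[w(\cdot-\rho_{i,1})-w(\cdot-\rho_{i,2})]$ still enjoys the sharp exponential localization around $\rho_i^0$ exploited in Lemma \ref{remark}, so that the bound by $\xF$ survives with an additional factor $\|h_1-h_2\|_\xL$. Once this bookkeeping is carried out, the remaining estimates are a direct adaptation of the ones in the proof of Lemma \ref{cilemma}.
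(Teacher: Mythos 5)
Your overall framework --- writing the nearly diagonal system as $M(h)D=\mathbf{F}(\psi,h)$, subtracting to isolate $M(h_1)[D(\psi_1,h_1)-D(\psi_2,h_2)]$, splitting into a pure $\psi$-difference and a pure $h$-difference, and controlling the $h$-variation by a mean-value argument on the coefficients $\rho_j$, $\rho_j'$, $f'(z)$, $M(h)$ --- is the right strategy and is essentially the structure behind the paper's one-line proof (which simply invokes \eqref{1}, \eqref{n2*}, \eqref{2} and Lemma \ref{cilemma}).

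There is, however, a genuine gap in the treatment of the source term. In the context of Lemma \ref{dia3}, the function $h(t,r)$ appearing in the last integral of system \eqref{ci(t)} is \emph{not} an independent source: it is $E(h)+N(\psi,h)$, where $N(\psi,h)=f(z+\psi)-f(z)-f'(z)\psi$. Since $N$ is quadratic/cubic in $\psi$, the map $\psi\mapsto\mathbf{F}(\psi,h_1)$ is \emph{not} linear, so $\mathbf{F}(\psi_1,h_1)-\mathbf{F}(\psi_2,h_1)\neq\mathbf{F}(\psi_1-\psi_2,h_1)$: you are dropping the contribution $\int_0^\infty\bigl(N(\psi_1,h_1)-N(\psi_2,h_1)\bigr)w'(r(y)-\rho_j)\,r(y)\,dy$. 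The paper controls exactly this via estimate \eqref{1} of Lemma \ref{dia1}, using the smallness $\|\psi_i\|_{\mathcal{C}_\Phi}\leq 2C_0(\log T_0/T_0)^\nu$ coming from $\psi_i\in X_{T_0}$ (this is also why the constant in the statement is allowed to depend on $C_0$). Similarly, the $h$-difference of the source, $\int\bigl[E(h_1)-E(h_2)\bigr]w'r + \int\bigl[N(\psi_2,h_1)-N(\psi_2,h_2)\bigr]w'r$, is not captured by your termwise mean-value argument on $w^{(\ell)}(r-\rho_i)$, $\rho_j'$ and $f'(z_\tau)$; it needs estimates \eqref{2} (Lemma \ref{dia2}) and \eqref{n2*} (Lemma \ref{dia1}), which you do not invoke. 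Once these two missing pieces are restored --- i.e., once you treat the source as $E(h)+N(\psi,h)$ and feed the Lipschitz bounds \eqref{1}, \eqref{n2*}, \eqref{2} into the source term of estimate \eqref{ciest} --- your decomposition and matrix-inversion argument go through and match the paper's intent.
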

\begin{proof}
The result follows by \eqref{1}, \eqref{n2*}, \eqref{2} and Lemma \ref{cilemma}.
\end{proof}

\medskip
\begin{proof}[\textbf{Proof of Proposition \ref{mainproposition}}]
a)
We consider the operator $H: \mathcal{C}_\xF((-\infty,-T_0)\times(0,\infty))\rightarrow \mathcal{C}_\xF((-\infty,-T_0)\times(0,\infty)),$
where $H(\psi)$ denotes the solution to (\ref{2.14}).
We will show that the map $H$ defines a contraction mapping and we will apply the fixed point theorem to it.
First we note by Lemma \ref{remark} and Proposition \ref{prop1} that

\bal
||H(0)||_{\mathcal{C}_\xF((-\infty,-T_0)\times(0,\infty))}\leq  C_0\left(\frac{\log T_0}{T_0}\right)^\xn.
\eal
and by Proposition \ref{prop1} and Lemma \ref{dia1}
\bal
||H(\psi_1)-H(\psi_2)||_{\mathcal{C}_\xF((-\infty,-T_0)\times(0,\infty))}\leq  C\left(\frac{\log T_0}{T_0}\right)^\xn
\left(||\psi_1-\psi_2||_{\mathcal{C}_\xF((-\infty,-T_0)\times(0,\infty))}\right),
\eal
providing
\bal
||\psi_i||_{\mathcal{C}_\xF((-\infty,-T_0)\times(0,\infty))}\leq2 C_0\left(\frac{\log T_0}{T_0}\right)^\xn.
\eal
Thus if we choose $T_0$ big enough we can apply the fix point Theorem in
\bal
X_{T_0}=\{\psi:\;||\psi||_{\mathcal{C}_\xF((-\infty,-T_0)\times(0,\infty))}<2 C_0\left(\frac{\log T_0}{T_0}\right)^\xn\},
\eal
to obtain that there exists $\psi$ such that $H(\psi)=\psi.$

b) For simplicity we set  $\psi^1 = 	\Psi(h_1)$ and
 $\psi^2 = 	\Psi(h_2).$ The estimate will be obtained by applying estimate \eqref{estfix**}. However, because each
 $\psi^i$ satisfies the orthogonality conditions (\ref{orthcon})  with $\rho(t) = \rho^i(t) := \rho^0(t)+ h_i(t),$ the
difference  $\psi^1-\psi^2$ doesn't satisfy an exact orthogonality condition. To overcome this technical difficulty
we will consider instead the difference $Y :=  \psi^1-\overline{\psi}^2,$ where

\bal
\overline{\psi}^2=\psi^2-\sum_{i=1}^k\xl_i(t)w'(r(y)-\rho_i^1)
\eal
with
\bal
\sum_{i=1}^k\xl_i(t)\int_{0}^\infty w'(r(y)-\rho_i^1(t))w'(r(y)-\rho_j^1(t))r(y)dy=\int_{0}^\infty \psi^2(t,y)w'(r(y)-\rho_j^1(t)) r(y)dy,
\eal
$j=1,...,k.$
Clearly, $Y$ satisfies the orthogonality conditions (\ref{orthcon}) with $\rho(t) = \rho^1(t).$  In addition, by Lemmas \ref{dia1}, \ref{dia2} and \ref{dia3}
and \eqref{mainbound}, we can easily prove
\ba\BAL
||Y||_{\mathcal{C}_\xF((-\infty,-T_0)\times(0,\infty))}&\leq C \left(\frac{\log T_0}{T_0}\right)^\xn\left(||\psi_1-\psi_2||_{\mathcal{C}_\xF((-\infty,-T_0)\times(0,\infty))}+||h_1-h_2||_{\xL}\right) \\ &+C\left(\frac{\log T_0}{T_0}\right)^\xn\left(\sum_{i=1}^k\sup_{t\in(-\infty,-T_0)}\bigg(\frac{|t|}{\log|t|}\bigg)^{\frac{\xs}{\sqrt{2}}}
(|\xl_i(t)|+|\xl_i'(t)|)\right).\EAL\label{pao0}
\ea
Now, by orthogonality conditions \eqref{orthcon} and estimate \eqref{*****}, we have
\ba\BAL
\left|\int_{0}^\infty \psi^2(t,y)w'(r(y)-\rho_j^1(t))r(y)dy\right|&=\left|\int_{0}^\infty \psi^2(t,y)(w'(r(y)-\rho_j^1(t))-w'(r(y)-\rho_j^2(t)))r(y)dy\right|\\
&\leq C \left(\frac{\log T_0}{T_0}\right)^\xn\bigg(\frac{|t|}{\log|t|}\bigg)^{-\frac{\xs}{\sqrt{2}}}||h_1-h_2||_{\xL}\sum_{i=1}^k(\rho_i^0)^{N-1}.\label{pao1}
\EAL
\ea

Now
\begin{align}\nonumber
&\left|\frac{d\int_{0}^\infty \psi^2(t,r(y))w'(r(y)-\rho_j^1(t))r(y)dy}{dt}\right|\\
&=\left|\frac{d\int_{0}^\infty \psi^2(t,r(y))(w'(r(y)-\rho_j^1(t))-w'(r(y)-\rho_j^2(t)))r(y)dy}{dt}\right|.\label{pao2}
\end{align}

Set $W(t,y):= w'(r(y)-\rho_j^1(t))-w'(r(y)-\rho_j^2(t)),$ then

\bal
&\int_{0}^\infty \psi^2_t(t,y)W(t,y)r(y)dy=(N-1)^2\int_0^\infty\psi^2(t,y)\frac{d}{dy}\left((y^2+1)^{\frac{2N-3}{2(N-1)}}W_y\right)dy\\
&+\int_{0}^\infty \psi^2(t,y)f'(z(t,r(y)))W(t,y)r(y)dy+\int_{0}^\infty (E(h_2)+N(\psi^2,h_2) )W(t,y)r(y)dy.
\eal
This implies that
\ba
\left|\int_{0}^\infty \psi^2_t(t,y)W(t,y)r(y)dy\right|
\leq C \left(\frac{\log T_0}{T_0}\right)^\xn\bigg(\frac{|t|}{\log|t|}\bigg)^{-\frac{\xs}{\sqrt{2}}}||h_1-h_2||_{\xL}\sum_{i=1}^k(\rho_i^0)^{N-1}.\label{pao3}
\ea
By \eqref{pao1}, \eqref{pao2}, \eqref{pao3} and definitions of $\xl_i$ we have that
\bal
 |\xl_i(t)|+|\xl_i'(t)|\leq C \left(\frac{\log T_0}{T_0}\right)^\xn\bigg(\frac{|t|}{\log|t|}\bigg)^{-\frac{\xs}{\sqrt{2}}}||h_1-h_2||_{\xL}.
\eal
Combining all above we have that
\bal
||Y||_{\mathcal{C}_\xF((-\infty,-T_0)\times(0,\infty))}\leq C \left(\frac{\log T_0}{T_0}\right)^\xn\left(||\psi_1-\psi_2||_{\mathcal{C}_\xF((-\infty,-T_0)\times(0,\infty))}+||h_1-h_2||_{\xL}\right)
\eal
But
\bal
||\psi_1-\psi_2||_{\mathcal{C}_\xF((-\infty,-T_0)\times(0,\infty))}&\leq ||Y||_{\mathcal{C}_\xF((-\infty,-T_0)\times(0,\infty))}+C\left(\sum_{i=1}^k\sup_{t\in(-\infty,-T_0)}
\bigg(\frac{|t|}{\log|t|}\bigg)^{\frac{\xs}{\sqrt{2}}}|\xl_i(t)|\right)\\
&\leq  C \left(\frac{\log T_0}{T_0}\right)^\xn\left(||\psi_1-\psi_2||_{\mathcal{C}_\xF((-\infty,-T_0)\times(0,\infty))}+||h_1-h_2||_{\xL}\right),
\eal
and the proof of inequality \eqref{diafora} follows, if we choose $T_0$ large enough.
\end{proof}

\section{the choice of $\rho_i$}\label{xiint}
Let $T_0$ be large enough, $\frac{\sqrt{2}}{2}<\xs<\sqrt{2}$ and $\psi\in\mathcal{C}_\xF((-\infty,-T_0)\times(0,\infty))$ be the solution of problem (\ref{mainpro}).
We want to find $\rho_i$ such that $d_i=0$ in \eqref{ole} for any $i=1,...,k.$ We recall here that 
\bal
\xL=\{h\in C^1(-\infty,-T_0]:\;\sup_{t\leq -T_0}|h(t)|+\sup_{t\leq -T_0}\frac{|t|}{\log|t|}|h'(t)|<1\}
\eal
and
\bal
||h||_\xL=\sup_{t\leq -T_0}(|h(t)|)+\sup_{t\leq -T_0}(\frac{|t|}{\log|t|}|h'(t)|)
\eal
for some $T_0>2.$ In addition, we assume that  $\rho=\rho^0+h$ for some $h\in\xL.$

 Let $1<j<k,$ then we have that
\bal
&\int_0^\infty \bigg(f(z(t,r(y)))-\sum_{i=1}^{k}(-1)^{i+1}f(w(r(y)-\rho_i(t)))\bigg)w'(r(y)-\rho_j(t))r(y)dy\\
&=\int_{1-\rho_j(t)}^\infty \bigg(f(z(t,x+\rho_j(t)))-\sum_{i=1}^{k}(-1)^{i+1}f(w(x+\rho_j(t)-\rho_i(t)))\bigg)\\
&\qquad\qquad w'(x)
\frac{(x+\xr_j(t))^{2(N-1)}}{\sqrt{(x+\xr_j(t))^{2(N-1)}-1}}dx\\
&=\xr_j(t)^{N-1}\int_{-\infty}^\infty\left(f(z(t,x+\rho_j(t)))-\sum_{i=1}^{k}(-1)^{i+1}f(w(x+\rho_j(t)-\rho_i(t)))\right)w'(x)dx +F_0(\xr)
\eal
Let $\xe\in (0,1).$ By \eqref{213}, there exists a positive constant $C=C(\xe,T_0,k)$ such that 
\bal
|F_0(\xr)|\leq C(\xe,T_0,k)\xr_j(t)^{N-2}\bigg(\frac{|t|}{\log|t|}\bigg)^{-\xe}.
\eal

Next, for simplicity we assume that $j$ is even and we write $z=g+g_1+g_2-w(x),$ where

\begin{align*}
g&=\sum_{i=1}^{j-2}(-1)^{i+1}\left(w(x+\rho_j(t)-\rho_{i}(t))-1\right)
\\&+\sum_{i=j+2}^{k}(-1)^{i+1}\left(w(x+\rho_j(t)-\rho_{i}(t))+1\right),
\end{align*}
\bal
g_1=w(x+\rho_j-\rho_{j-1})-1,
\eal
and
\bal
g_2=w(x+\rho_j-\rho_{j+1})+1.
\eal

By straightforward calculations we have

\begin{align}\nonumber
I&=\int_{-\infty}^\infty \left(f(z(t,x-\rho_j(t)))-\sum_{i=1}^{k}(-1)^{i+1}f(w(x+\rho_j(t)-\rho_i(t)))\right)w'(x)dx\\ \nonumber
&=3\int_{-\infty}^\infty (g_1+g_2)(1-w^2(x))w'(x)dx+3\int_{-\infty}^\infty g_1^2(1+w(x))w'(x)dx \\ \nonumber
&+3\int_{-\infty}^\infty g_2^2(w(x)-1)w'(x)dx
+\int_{-\infty}^\infty G_1(t,x)w'(x)dx,
\end{align}
where $G_1(t,x)=O(g)+O(g_1g_2).$ Hence,

\bal
I=3\int_{-\infty}^\infty (g_1+g_2)(1-w^2(x))w'(x)dx+F_3(\xr),
\eal
where

\bal
|F_3(\xr)|\leq C(\xe,T_0,k)\xr_j(t)^{N-2}\bigg(\frac{|t|}{\log|t|}\bigg)^{-\xe-1}.
\eal

Now,

\bal
3\int_{-\infty}^\infty (g_1+g_2)(1-w^2(x))w'(x)dx&=-6e^{-\sqrt{2}(\rho_{j}-\rho_{j-1})}\int_{-\infty}^\infty e^{-\sqrt{2}x}(1-w^2(x))w'(x)dx\\
&+6e^{-\sqrt{2}(\rho_{j}-\rho_{j+1})}\int_{-\infty}^\infty e^{\sqrt{2}x}(1-w^2(x))w'(x)dx+F_4(\xr),
\eal
where

\bal
|F_4(\xr)|\leq C(\xe,T_0,k)\bigg(\frac{|t|}{\log|t|}\bigg)^{-\xe-1}.
\eal

Treating the other terms in a similar way, we may show that if $d_i=0$ in \eqref{ole} for any $i=1,...,k,$ then $\xr=(\xr_1,...,\xr_k)$ satisfies the following ODE

\be
\rho_j'+\frac{n-1}{\rho_j}-\xb e^{-\sqrt{2}(\rho_{j+1}-\rho_j)}+\xb e^{-\sqrt{2}(\rho_{j}-\rho_{j-1})}=G_j(\rho',\rho),\qquad \forall j=1,2,...,k\;\text{and}\;t\in(-\infty,-T_0],\label{ode*}
\ee
for some functions $G_j$ $(j=1,...,k)$ such that 

\bal
\max_{j\in\{1,...,k\}}|G_j(\xr',\xr)|\leq \frac{C(k,N,\xs)}{|t|}\quad\forall t\leq -T_0.
\eal
Here, we have set $\rho_{k+1}=\infty,$ $\rho_0=-\infty$ and
\be \label{beta} \xb=\frac{6\int_{\mathbb{R}}e^\frac{2x}{\sqrt{2}}(1-w^2(x))w'(x)dx}{\int_{\mathbb{R}}(w'(x))^2dx}.\ee

Let
\bal
\overline{\mathbf{F}}(h',h)=\mathbf{G}(\rho',\rho),
\eal
where $\rho=\rho^0+h.$ By proceeding as above and using \eqref{diafora}, we may deduce the following result.

\begin{proposition}
Let $\frac{\sqrt{2}}{2}<\xs<\sqrt{2}$ and $h,\;h_1,\;h_2\in \xL.$ Then there exists a constant $C=C(\xs,n,k)>0$ such that
\bal
|\overline{\mathbf{F}}(h',h)|\leq \frac{C}{|t|},
\eal
and
\bal
|\overline{\mathbf{F}}(h'_1,h_2)-\overline{\mathbf{F}}(h'_1,h_2)|\leq C\left(\frac{\log|t|}{|t|}\right)^{\frac{1}{2}+\frac{\xs}{\sqrt{2}}}||h_1-h_2||_{\xL}.
\eal
\label{remark2}
\end{proposition}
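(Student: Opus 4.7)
The plan is to revisit the computation that led to the ODE (\ref{ode*}) in Section \ref{xiint} and, instead of merely isolating the Toda-like leading terms, to track quantitatively every piece absorbed into the remainder $\mathbf{G}(\rho',\rho)$. Setting $\rho = \rho^0 + h$, I would split the left-hand side of the projection equation (\ref{ole}) (with $d_i = 0$) into the part that reproduces the explicit system (namely $\rho_j' + (n-1)/\rho_j$ together with the two exponentials $\pm \xb\, e^{-\sqrt{2}(\rho_{j\pm 1}-\rho_j)}$) and the remainder. The first bound of the proposition is essentially the restatement $|G_j| \leq C/|t|$ already sketched above (\ref{ode*}); the second bound requires showing that these same remainders are Lipschitz in $h$ with a power of $(\log|t|/|t|)$ to spare.

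For $|\ol{\mathbf{F}}(h',h)| \leq C/|t|$, I would collect every term feeding into $G_j$: the expansion remainders $F_0, F_3, F_4$ from the derivation above; the correction $(N-1)/r - (N-1)/\rho_j$ after integration against $w'(r-\rho_j)$; the off-diagonal projections (for $i \neq j$) of $\rho_i'\, w'(r-\rho_i) + (N-1)w'(r-\rho_i)/r$, which decay like $e^{-\sqrt{2}|\rho_i - \rho_j|}$ and hence as a positive power of $\log|t|/|t|$; the term $w''(r-\rho_i)/(1+y^2)$, of order $1/\rho_j^2$; and the four $\psi$-dependent terms in (\ref{ole}), controlled through $\|\psi\|_{\mathcal{C}_\xF} \leq 2C_0(\log T_0/T_0)^\xn$ from Proposition \ref{mainproposition}, together with the weight estimates (\ref{****})--(\ref{*****}) and the bound (\ref{xi'}). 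After dividing by the diagonal coefficient $\rho_j^{N-1} \int_\BBR (w'(x))^2 dx$, each contribution is $O(1/|t|)$.

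For the Lipschitz estimate on $\ol{\mathbf{F}}(h_1',h_1) - \ol{\mathbf{F}}(h_2',h_2)$, I would apply the same decomposition to the difference. The purely $\rho$-dependent pieces are handled by the mean-value theorem, exploiting that $\|h\|_\xL$ controls $(|t|/\log|t|)|h'|$: perturbing $\rho_{j\pm 1} - \rho_j$ by $O(\|h_1-h_2\|_\xL)$ multiplies the exponentials by $1 + O(\|h_1-h_2\|_\xL)$, while differentiating them in $t$ costs a factor of order $\log|t|/|t|$; combined with the ambient smallness this produces the exponent $1/2 + \xs/\sqrt{2}$ advertised in the statement, consistently with the exponent in (\ref{bound}) and Lemma \ref{remark}. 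For the $\psi$-dependent pieces, the indirect dependence through $\psi = \Psi(h)$ is controlled by (\ref{diafora}), while the direct dependence (in the test functions $w'(r-\rho_j)$ and $w''(r-\rho_j)$) is handled by the template (\ref{pao1})--(\ref{pao3}) together with $\|\Psi(h_i)\|_{\mathcal{C}_\xF} \leq 2 C_0 (\log T_0/T_0)^\xn$.

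The main obstacle is the bookkeeping for the $\psi$-dependent contributions, because $\psi$ depends implicitly on $h$ via the fixed-point problem (\ref{2.14}) and one must simultaneously control both its $\mathcal{C}_\xF$ norm and how it interacts with test functions that also vary with $h$. The cleanest route is to introduce an intermediate $\psi$-term (with $\rho=\rho^1$ in the test function but $\psi = \Psi(h_2)$, or vice versa) so that the difference splits into a pure $\psi$-variation part (bounded by (\ref{diafora})) and a pure test-function-variation part (bounded using exponential decay of $w',w''$ and (\ref{*****})). Summing all contributions and choosing $T_0$ sufficiently large should yield the stated Lipschitz exponent $1/2 + \xs/\sqrt{2}$.
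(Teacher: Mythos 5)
Your proposal follows essentially the same route as the paper, which itself only states that the result follows "by proceeding as above and using \eqref{diafora}": you revisit the derivation of \eqref{ode*}, identify the pieces absorbed into $\mathbf{G}$ (expansion remainders $F_0,F_3,F_4$, the curvature correction, off-diagonal projections, and the $\psi$-dependent integrals from \eqref{ole}), bound them via Lemma \ref{cilemma}, estimates \eqref{****}--\eqref{*****}, \eqref{xi'}, and the smallness $\|\psi\|_{\mathcal{C}_\xF}\leq 2C_0(\log T_0/T_0)^\xn$, and obtain the Lipschitz estimate by combining the mean-value theorem for the explicit $\rho$-dependence with \eqref{diafora} for the implicit dependence through $\psi=\Psi(h)$, using the intermediate-term trick of \eqref{pao1}--\eqref{pao3} to decouple test-function variation from $\psi$ variation. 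This is precisely the argument the paper intends, and your accounting of where the exponent $\tfrac12+\tfrac{\xs}{\sqrt2}$ arises (matching Lemma \ref{dia3}) is consistent.
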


Finally, following the arguments in \cite[Subections 5.1 and 5.2]{gklpino1}, we may show the following result. 

\begin{proposition}
Let $T_0>2.$ Then there exist a solution $\rho=(\rho_1,...,\rho_k)$ to equation \eqref{ode*}. In addition, there holds  

\bal
\rho(t) = \rho^0(t)  + h(t)
\eal
for some function $h=(h_1,...,h_k)$ satisfying
  
\ba
\sup_{t\leq -T_0}|\log|t||\,|{h}(t)|+\sup_{t\leq -T_0}\frac{|t|}{\log|t|}|{h}'(t)|<1\quad\text{for}\;j=1,...,k.
\ea
\end{proposition}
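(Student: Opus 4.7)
The plan is to cast the system \eqref{ode*} as a fixed-point problem for the perturbation $h = \rho - \rho^0$, exploiting the fact that the explicit profile $\rho^0$ in \eqref{fff} has been engineered, through the choice of $\eta$ and of the shifts $\gamma_j$, to satisfy the homogeneous version of \eqref{ode*} (that is, with $G_j \equiv 0$) up to a small explicit error. First I will substitute $\rho_j = \rho^0_j + h_j$, factorize $\beta e^{-\sqrt{2}(\rho_{j+1}-\rho_j)} = \beta e^{-\sqrt{2}(\rho^0_{j+1}-\rho^0_j)} e^{-\sqrt{2}(h_{j+1}-h_j)}$, and expand $(n-1)/\rho_j$ and the exponentials to first order in $h$. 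This recasts \eqref{ode*} as
\begin{equation*}
h_j' + \mathcal{L}(h)_j = \mathcal{R}_j(t) + \mathcal{N}_j(h) + \overline{\mathbf{F}}_j(h',h), \qquad j=1,\ldots,k,
\end{equation*}
where $\mathcal{L}$ is a tridiagonal first-order linear operator whose coefficients are proportional to $\beta e^{-\sqrt{2}\eta(t)}\sim \log|t|/|t|$, $\mathcal{R}_j(t)$ is the explicit residual obtained by plugging $\rho^0$ into \eqref{ode*} (of size $1/(t\log|t|)$ by the very definition of $\eta$ and $\gamma_j$), $\mathcal{N}_j(h)$ collects the quadratic-and-higher remainder in $h$, and $\overline{\mathbf{F}}_j$ is the coupling term controlled by Proposition~\ref{remark2}.

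Next I will invoke the linear theory from \cite[Subsections~5.1--5.2]{gklpino1}: since the leading-order operator $\mathcal{L}$ and the profile $\rho^0$ here coincide with those of that reference, the inhomogeneous linear problem $\tilde h_j' + \mathcal{L}(\tilde h)_j = g_j$ admits, for any forcing $g$ with $\sup_{t\leq -T_0}|t|\,|g(t)| < \infty$, a unique solution $\tilde h$ enjoying an estimate of the form
\begin{equation*}
\sup_{t\leq -T_0}|\log|t||\,|\tilde h(t)| + \sup_{t\leq -T_0}\frac{|t|}{\log|t|}|\tilde h'(t)| \leq C\,\sup_{t\leq -T_0}|t|\,|g(t)|.
\end{equation*}
This invertibility result, which is the technical core of the argument, I will take as a black box from \cite{gklpino1}.

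I will then define the operator $\mathcal{T}:\Lambda\to\Lambda$ sending $h$ to the unique $\tilde h$ that solves $\tilde h_j' + \mathcal{L}(\tilde h)_j = \mathcal{R}_j + \mathcal{N}_j(h) + \overline{\mathbf{F}}_j(h',h)$. Combining the size of $\mathcal{R}$, the fact that $\mathcal{N}_j(h) = O(\|h\|_\Lambda^2)$ in the relevant weighted norm, and Proposition~\ref{remark2} (which provides both the bound $|\overline{\mathbf{F}}(h',h)|\leq C/|t|$ and the Lipschitz estimate $|\overline{\mathbf{F}}(h_1',h_1) - \overline{\mathbf{F}}(h_2',h_2)|\leq C(\log|t|/|t|)^{\frac{1}{2}+\sigma/\sqrt{2}}\|h_1-h_2\|_\Lambda$), I will verify that for $T_0$ large enough $\mathcal{T}$ sends a small ball of $\Lambda$ into itself and is a contraction. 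Banach's fixed point theorem then yields the sought $h$, and the improved bound $\sup |\log|t||\,|h(t)| < 1$ follows automatically by reapplying the linear estimate to the fixed point.

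The main obstacle is hidden in the second step: establishing the invertibility of the linearized Toda operator $\mathcal{L}$ on a weighted space that simultaneously captures the logarithmic smallness of $h$ and the $\log|t|/|t|$ smallness of $h'$. This is a nontrivial singular linear ODE problem, because the coefficients of $\mathcal{L}$ decay only like $\log|t|/|t|$ and the system degenerates as $t\to -\infty$; however, the leading-order structure is identical to that already analyzed in \cite[Subsections~5.1--5.2]{gklpino1}, so the actual work reduces to carefully matching the coefficients and error terms here to those of the reference.
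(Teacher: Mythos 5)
Your proposal is correct and takes essentially the same approach as the paper. The paper's proof of this Proposition is in fact only the one-line citation ``following the arguments in \cite[Subsections 5.1 and 5.2]{gklpino1}, we may show the following result,'' and your reconstruction --- expanding $\rho=\rho^0+h$, linearizing the Toda-type system \eqref{ode*} around $\rho^0$ to obtain a tridiagonal operator with coefficients of order $\beta e^{-\sqrt 2\eta}\sim\log|t|/|t|$, invoking the invertibility of that linearized operator on the weighted space as a black box from \cite{gklpino1}, and closing the fixed point in $\Lambda$ using the bounds of Proposition \ref{remark2} --- is precisely the intended route.
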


\end{document}